\documentclass[smallextended,runningheads]{svjour3}
\usepackage{amsmath,amssymb,mathtools,mathabx,rotating}
\journalname{Submitted article}
\newcommand*{\id}{{\mathrm{id}}}
\newcommand*{\Ab}{{\mathbb A}}
\newcommand*{\Kb}{{\mathbb K}}

\newcommand*{\la}{{\langle}}
\newcommand*{\ra}{{\rangle}}
\newcommand*{\sh}
{{\,\begin{sideways}\begin{sideways}\begin{sideways}
$\exists$\end{sideways}\end{sideways}\end{sideways}}}
\newcommand*{\lb}{{[}}
\newcommand*{\rb}{{]}}

\newcommand*{\rH}{{\mathrm H}}

\newcommand*{\R}{{\mathbb R}}
\newcommand*{\CC}{{\mathbb{C}}}

\newcommand*{\Sb}{{\mathbb S}}
\newcommand*{\Nb}{{\mathbb N}}

\newcommand*{\quas}{{\ast}}

\newcommand*{\pa}{{\partial}}
\newcommand*{\cL}{{\mathrm L}}

\newcommand*{\rd}{{\mathrm d}}
\newcommand*{\End}{{\mathrm{End}}}

\newcommand*{\strat}{\,{\stackrel{\text{\tiny \textsf{o}}}{\phantom{\_}}}\,\,}

\smartqed

\begin{document}

\title{The exponential Lie series for continuous semimartingales}
\author{Kurusch Ebrahimi--Fard \and Simon~J.A.~Malham \and Frederic Patras \and Anke Wiese}
\titlerunning{Exponential Lie series}
\authorrunning{Ebrahimi--Fard, Malham, Patras and Wiese}

\institute{Kurusch Ebrahimi--Fard \at
Instituto de Ciencias Matem\'aticas,
Consejo Superior de Investigaciones Cient\'{i}ficas,
C/ Nicol\'as Cabrera, no. 13-15, 28049 Madrid, Spain
\and
Simon J.A. Malham \and Anke Wiese \at
Maxwell Institute for Mathematical Sciences,
and School of Mathematical and Computer Sciences,
Heriot-Watt University, Edinburgh EH14 4AS, UK
\and
Frederic Patras \at
Laboratoire J.A. Dieudonn\'e,
UMR CNRS-UNS No. 7351,
Universit\'e de Nice Sophia-Antipolis,
Parc Valrose,
06108 NICE Cedex 2, France
}

\date{25th June 2015}
\voffset=10ex

\maketitle
\begin{abstract}
We consider stochastic differential systems 
driven by continuous semimartingales and governed
by non-commuting vector fields. We prove that
the logarithm of the flowmap is an exponential 
Lie series. This relies on a natural change of basis
to vector fields for the associated quadratic covariation
processes, analogous to Stratonovich corrections. 
The flowmap can then be expanded as a 
series in compositional powers of vector fields 
and the logaritm of the flowmap can thus be expanded
in the Lie algebra of vector fields. Further, we give
a direct self-contained proof of the corresponding 
Chen--Strichartz formula which provides an explicit
formula for the Lie series coefficients. Such 
exponential Lie series are important in the development
of strong Lie group integration schemes that ensure
approximate solutions themselves lie in any homogeneous
manifold on which the solution evolves.
\keywords{It\^o stochastic flows \and quasishuffle product 
\and exponential Lie series \and Chen--Strichartz formula} 
\subclass{60H10 \and 60H35}                                
\end{abstract}

\section{Introduction}\label{sec:intro}
We are concerned with It\^o stochastic differential systems 
driven by continuous semimartingales and governed
by non-commuting vector fields of the following form
\begin{equation*}
Y_t=Y_0+\sum_{i=1}^d\int_0^t V_i(Y_\tau)\,\rd X_\tau^i,
\end{equation*}
for time $t\in[0,T]$ for some $T>0$.
Here the solution process $Y_t$ is $\R^N$-valued for some $N\in\Nb$.
For each $i=1,\ldots,d$, the  $X_t^i$ are driving scalar 
continuous semimartingales, and associated with each
are governing vector fields $V_i$ which are
sufficiently smooth and in general non-commuting. 
Our goal herein is to compute the logarithm of the flowmap for such a system, 
i.e.\/ the exponential series for the flowmap, and establish that it is a Lie series.
The exponential series for the flowmap for stochastic differential systems 
driven by general continuous semimartingales was derived in Ebrahimi-Fard,
Malham, Patras and Wiese~\cite{E-FMPW}. What we achieve that is new in 
this paper is we:
\begin{enumerate}
\item Establish the abstract algebraic structures that underly the flowmap
and computation of functions of the flowmap in the context of general continuous semimartingales;   
\item Show by a suitable change of coordinates, the exponential series is a Lie series;
\item Give a direct self-contained proof of the corresponding Chen--Strichartz formula 
which provides an explicit formula for the Lie series coefficients.
\end{enumerate}
The key idea that underlies establishing the exponential series as a Lie series
is to express the flowmap in Fisk--Stratonovich form for which the standard rules 
of calculus apply; see Protter~\cite{Protter}. 
A crucial integral ingredient in this step is that the 
Fisk--Stratonovich formulation of the flowmap can be expanded in
a basis of terms involving solely compositions of vector fields---without
any second order partial differential operators. We can then compute the logarithm 
of the Fisk--Stratonovich representation of the flowmap.  This can be accomplished 
in principle via the classical Chen--Strichartz formula using the shuffle relations satisfied 
by multiple Fisk--Stratonovich integrals as well as utilizing the Dynkin--Friedrichs--Specht--Wever
Theorem to expand the logarithm in Lie polynomials of the vector fields.
We subsequently convert the multiple Fisk--Stratonovich integrals back into multiple 
It\^o integrals. This procedure thus generates an It\^o exponential Lie series. 
That the logarithm of the flowmap is in fact an exponential Lie series 
is important for example, for the development of strong stochastic Lie group 
integration methods. See Malham and Wiese~\cite{MW} for the development
of such methods for Stratonovich stochastic differential systems
driven by Wiener processes, for example those based on the Castell--Gaines
numerical simulation approach, see Castell and Gaines~\cite{CG,CG2}.

The development of exponential solution series for deterministic systems 
originates with the work of Magnus~\cite{Magnus} and Chen~\cite{Chen} 
in the 1950's, and more recently with Strichartz~\cite{S}.
Its development and early application to stochastic systems is represented
by the work of Azencott~\cite{Azencott}, Ben Arous~\cite{BA},
Castell and Gaines~\cite{CG,CG2} and Baudoin~\cite{Baudoin}. 
Also see Fliess~\cite{Fliess} and Lyons~\cite{L} 
for its development in control and theory of rough paths, respectively.
The shuffle product was cemented in firm foundations by the 
work of Eilenberg and Mac Lane~\cite{EM} and Sch\"utzenberger~\cite{Schutzenberger},
also in the 1950's. The quasi-shuffle product is a natural 
extension of the shuffle product, for example to multiple It\^o
integrals. For a selective insight into its recent development 
in this context, see Gaines~\cite{G,G2}, Hoffman~\cite{H}, 
Ebrahimi--Fard and Guo~\cite{E-FG}, Hoffman and Ihara~\cite{HI} 
and Curry, Ebrahimi--Fard, Malham and Wiese~\cite{CE-FMW}. 

Our paper is structured as follows. 
In Section~\ref{sec:flowmap} we derive the It\^o chain rule and flowmap
for systems driven by continuous semimartingales. Then in Section~\ref{sec:algebra} 
we establish the abstract algebraic structures that underpin the flowmap and its logarithm.
We endeavour to keep the connection to the stochastic differential system 
of interest and provide illustrative examples. We define the transformation to
Stratonovich form we require in Section~\ref{sec:Lie} and prove that the exponential series 
is a Lie series. Our direct self-contained derivation of the Chen--Strichartz coefficients 
is provided in Section~\ref{sec:Chen--Strichartz}.
Lastly, we provide some concluding remarks in Section~\ref{sec:conclu}.

\section{It\^o chain rule and flowmap}\label{sec:flowmap}
Consider the It\^o stochastic differential system given in the introduction
of the form
\begin{equation*}
Y_t=Y_0+\sum_{i=1}^d\int_0^t V_i(Y_\tau)\,\rd X_\tau^i.
\end{equation*}
Here for each $i=1,\ldots,d$, the  $X_t^i$ are driving scalar 
continuous semimartingales on a filtered probability space 
$\bigl(\Omega,\mathcal F,(\mathcal F_t)_{t\geqslant0},P\bigr)$ satisfying
the \emph{usual conditions} of completeness and right-continuity.
We assume without loss of generality that the $X^i$ are chosen such
that the quadratic covariations $[X^i,X^j]=0$ for all $i\neq j$.
The $V_i$ are associated governing vector fields which we assume 
are sufficiently smooth and in general non-commuting. 
We suppose the solution process $Y_t$, 
which is $\R^N$-valued for some $N\in\Nb$, exists on some finite
or possibly infinite time interval.
In coordinates the vector fields $V_i$ for each $i=1,\ldots,d$ act as 
first order partial differential operators on any function 
$f\colon\R^N\to\R$ as follows
\begin{equation*}
V_i\colon f(Y)\mapsto\sum_{j=1}^NV_i^j(Y)\pa_{Y_j}f(Y).
\end{equation*}
For brevity we will often express this vector field action as
$(V_i\cdot\pa)f(Y)$ or $V_i\circ f\circ Y$.
\begin{definition}[Flowmap]
We define the flowmap $\varphi_t$ as the map prescribing the 
transport of the initial data $f\circ Y_0$ to the solution $f\circ Y_t$ 
at time $t$ for any smooth function $f$ on $\R^N$, 
i.e.\/ $\varphi_t\colon f\circ Y_0\mapsto f\circ Y_t$. 
\end{definition}
The solution $Y_t=\varphi_t\circ\id\circ Y_0$ corresponds to the choice 
$f=\id$, the identity map.
The It\^o chain rule is the key to developing the Taylor series
expansion for the solution $Y_t$ about the initial data. 
The It\^o chain rule implies that for any function $f\colon\R^N\to\R$, 
the quantity $f(Y_t)$ satisfies 
\begin{equation*}
f(Y_t)=f(Y_0)+\sum_{i=1}^d\!\int_0^t\!\bigl(V_i\cdot\pa\bigr)f(Y_\tau)\,\rd X_\tau^i
+\tfrac12\sum_{i=1}^d\!\int_0^t\!\bigl(V_i\otimes V_i\colon\pa^2\bigr)f(Y_\tau)\,\rd[X^i,X^i]_\tau,
\end{equation*}
see for example Protter~\cite{Protter}.
In this formula we have used the notation
\begin{equation*}
\bigl(V_i\otimes V_i\colon\pa^2\bigr)f(Y)\coloneqq\sum_{j,k=1}^NV_i^j(Y)V_i^k(Y)\pa_{Y_j}\pa_{Y_k}f(Y),
\end{equation*}
while for each $i=1,\ldots,d$ the terms $[X^i,X^i]$ represent the quadratic variation of $X^i$.
At this stage it makes sense to extend, first our set of driving continuous semimartingales
to include these quadratic variations, and second, our governing vector fields to include the
associated second order partial differential operators shown above. 
Thus for $i=1,\ldots,d$ we set $D_i\coloneqq V_i\cdot\pa$ and
\begin{equation*}
X^{[i,i]}\coloneqq[X^{i},X^{i}]
\qquad\text{and}\qquad
D_{[i,i]}\coloneqq\tfrac12 V_{i}\otimes V_{i}\colon\pa^2.
\end{equation*}
Then the It\^o chain rule takes the form 
\begin{equation*}
f\circ Y_t=f\circ Y_0+\sum_{a\in\Ab}\int_0^t D_a\circ f\circ Y_\tau\,\rd X_\tau^a,
\end{equation*}
where $\Ab$ denotes the alphabet set of letters $\{1,\ldots,d,[1,1],\ldots,[d,d]\}$.
Iterating this chain rule produces the formal Taylor series expansion for the solution
and thus flowmap given by
\begin{equation*}
\varphi_t=\sum_wI_w(t)D_w.
\end{equation*}
Here the sum is over all words/multi-indices $w$ that can be constructed from the alphabet
$\Ab$. All the stochastic information is encoded in the multiple
stochastic It\^o integrals $I_w=I_w(t)$ while the geometric information is 
encoded through the composition of partial differential operators $D_w$.
For a word $w=a_1\cdots a_n$ these terms are 
$D_w\coloneqq D_{a_1}\circ\cdots\circ D_{a_n}$ and
\begin{equation*}
I_w\coloneqq\int_{0\leqslant\tau_{2}\leqslant\cdots\leqslant\tau_{n}\leqslant t}
\,\rd X^{a_1}_{\tau_1}\cdots\,\rd X^{a_n}_{\tau_n}.
\end{equation*}
It is natural to abstract the solution flowmap
and view it as an object of the form
\begin{equation*}
\sum w\otimes w,
\end{equation*}
which lies in a tensor product of two algebras. The algebra on the
left is associated with multiple integrals and the algebra on the right is
associated with partial differential operators. The algebra
on the left should be endowed with a quasi-shuffle product, 
to reflect the fact that the real product
between two multiple It\^o integrals $I_u$ and $I_v$ generates a sum over
all multiple It\^o integrals generated by the quasi-shuffle of the words $u$
and $v$; we define this product precisely, presently. The algebra
on the right should be endowed with a concatenation product, to reflect
the fact that the composition of two differential operators $D_u$ and $D_v$
generates the differential operator equivalent to that represented by the 
concatenation of the words $u$ and $v$.
In the next section we define these underlying concatenation
and quasi-shuffle algebras, their corresponding Hopf algebras 
and the algebras associated with endomorphisms on them. 
These algebras prove useful in the following sections, they keep 
our proofs direct and succinct.

\section{Quasi-shuffle Hopf algebras and endomorphisms}\label{sec:algebra}
Our exposition here is based on 
Reutenauer~\cite{R}, Hoffman~\cite{H} and Hoffman and Ihara~\cite{HI}.
Let $\Ab$ denote a countable alphabet and $\Kb\Ab$ the vector space with
$\Ab$ as basis and $\Kb$ a field of characteristic zero. 
Suppose there is a commutative and associative product 
$[\,\cdot\,,\,\cdot\,]$ on $\Kb\Ab$. We use $\Kb\la\Ab\ra$ to denote 
the non-commutative polynomial 
algebra over $\Kb$ generated by monomials
(or words) we can construct from the alphabet $\Ab$.
We denote by $\Ab^\ast$ the free monoid of words on $\Ab$. 
\begin{definition}[Bilinear form]
We define the bilinear form 
$\la\,\cdot\,,\,\cdot\,\ra\colon\Kb\la\Ab\ra\otimes\Kb\la\Ab\ra\to\Kb$ 
for any words $u,v\in\Ab^\ast$ to be
\begin{equation*}
\langle u,v\rangle
\coloneqq\begin{cases}
1, &~~\text{if}~u=v,\\
0, &~~\text{if}~u\neq v.
\end{cases}
\end{equation*}
\end{definition}
This is equivalent to the scalar product given 
in Reutenauer~\cite[p.~17]{R} and Hoffman~\cite[p.~57]{H}. 
For this scalar product, the free monoid $\Ab^\ast$ forms
an orthonormal basis. 
We will always assume that $\Ab$ equipped with $[\,\cdot\,,\,\cdot\,]$
satisfies the following finiteness condition: for all letters $c\in\Ab$
the cardinality of the set $\{a,b\in\Ab\colon\la[a,b],c\ra\neq0\}$ is finite.
It is satisfied, though not restricted to, when $\Ab$ is finite. The following
example illustrates the case of a possibly infinite alphabet.
\begin{example}                                                           
Consider a minimal family of general semimartingales given by $\{X^1,\ldots,X^d\}$
in the sense outlined in Curry, Ebrahimi--Fard, Malham and Wiese~\cite{CE-FMW}.
We do not restrict ourselves here to continuous semimartingales. 
However, a collection of independent continuous semimartingales,
or a collection of independent L\'evy processes, is a minimal family. 
We can construct a countable alphabet $\Ab$ as outlined therein as follows. 
With each semimartingale we associate a letter $1,\ldots,d$. 
In addition, inductively for $n\geqslant2$,
we assign a distinct new letter
for each nested quadratic covariation process                                 
$[X^{k_1},[X^{k_2},[\ldots[X^{k_{n-1}},X^{k_n}]\ldots]]]$     
with $k_i\in\{1,\ldots,d\}$ for $i=1,\ldots,n$, 
``provided it is not in the linear span of 
$\{X^1,\ldots,X^d\}$ and previously constructed ones''.
Due to commutativity the order of the letters in the
$n$-fold nested bracket is irrelevant and associativity means that we
can render all $n$-fold nested brackets to the canonical form of
left to right bracketing shown, or more conveniently $[X^{k_1},\ldots,X^{k_n}]$.
We denote the new distinct letters by $[k_1,\ldots,k_n]$. 
Hence our underlying countable alphabet $\Ab$
consists of the letters $1,\ldots,d$ and all possible nested brackets
$[\,\cdot\,,\,\cdot\,]$ generated in this manner. 
For convenience we set $[X^i]\equiv X^i$ and thus also $[i]\equiv i$
on $\Kb\la\Ab\ra$.
\end{example}
We use $\Kb\la\Ab\ra$ to also denote the concatenation
algebra of words with concatentation as product. If
$u$ and $v$ are words in $\Kb\la\Ab\ra$, then their 
concatenation is $uv\in\Kb\la\Ab\ra$.
\begin{definition}[Quasi-Shuffle product]
For words $u,v$ and letters $a,b$ the quasi-shuffle product $\quas$ 
on $\Kb\la\Ab\ra$ is generated recursively by the formulae: 
$u\,\quas\, 1=1\,\quas\, u=u$, where `$1$' represents the empty word, and
\begin{equation*}
ua\,\quas\, vb=(u\,\quas\, vb)\,a+(ua\,\quas\, v)\,b+(u\,\quas\, v)\,[a,b].
\end{equation*}
\end{definition}
Endowed with this product $\Kb\la\Ab\ra$ is a commutative and associative 
algebra called the quasi-shuffle algebra which we denote by $\Kb\la\Ab\ra_\quas$; 
see Hoffman~\cite{H}.
In the special case when the generator $[\,\cdot\,,\,\cdot\,]$ is 
identically zero on $\Kb\Ab$, it reverts to the 
shuffle algebra $\Kb\la\Ab\ra_\sh$ of words with shuffle $\sh\,$ as product,
where $ua\,\sh\,\, vb=(u\,\sh\,\, vb)\,a+(ua\,\sh\,\, v)\,b$.
\begin{example}
The quasi-shuffle of the words 
$12$ and $34$ is given by
$12\,\quas\,34=1234+3412+1342+3142+1324+3124
+1[2,3]4+[1,3]42+3[1,4]2+[1,3]24+13[2,4]+31[2,4]+[1,3][2,4]$.
\end{example}
\begin{example}
A minimal family of semimartingales generates a quasi-shuffle algebra. This 
is proved in Curry \textit{et al.}\/ \cite{CE-FMW}.
\end{example}
\begin{definition}[Deconcatenation and de-quasi-shuffle coproducts]
We define the deconcatenation coproduct 
$\Delta\colon\Kb\la\Ab\ra\to\Kb\la\Ab\ra\otimes\Kb\la\Ab\ra$
for any word $w\in\Kb\la\Ab\ra$ by
\begin{equation*}
\Delta(w)\coloneqq\sum_{u,v}\la uv,w\ra\,u\otimes v.
\end{equation*}
We also define the de-quasi-shuffle coproduct
$\Delta'\colon\Kb\la\Ab\ra\to\Kb\la\Ab\ra\otimes\Kb\la\Ab\ra$
for any word $w\in\Kb\la\Ab\ra$ by
\begin{equation*}
\Delta'(w)\coloneqq\sum_{u,v}\la u\,\quas\, v,w\ra\,u\otimes v.
\end{equation*}
\end{definition}
The finiteness condition on $\Ab$ ensures that $\Delta'$ is well defined.
Endowed with the concatenation product and de-quasi-shuffle coproduct
$\Kb\la\Ab\ra$ is a Hopf algebra which we also denote by $\Kb\la\Ab\ra$.
No confusion should arise from the context.
In addition, when endowed with the quasi-shuffle product
and deconcatenation coproduct $\Kb\la\Ab\ra$ is another
Hopf algebra which we denote by $\Kb\la\Ab\ra_\quas$. The 
antipode in both cases is the signed reversing endomorphism.
We denote by $\End(\Kb\la\Ab\ra_\quas)$ the $\Kb$-module of linear
endomorphisms of $\Kb\la\Ab\ra_\quas$. 
\begin{definition}[Convolution products]
Suppose $X$ and $Y$ are two linear endomorphisms on the Hopf quasi-shuffle
algebra $\Kb\la\Ab\ra_\quas$. 
We define their quasi-shuffle convolution product $X\,\quas\, Y$ by the formula
$X\,\quas\, Y\coloneqq\mathrm{quas}\circ(X\otimes Y)\circ\Delta$,
where `$\mathrm{quas}$' denotes the quasi-shuffle product on $\Kb\la\Ab\ra_\quas$.
\end{definition}
\begin{remark}
We use the same notation for the quasi-shuffle convolution product 
as for the underlying product, no confusion should arise from the context.
There is also a concatenation convolution product 
$\mathrm{conc}\circ(X\otimes Y)\circ\Delta'$ 
on $\End(\Kb\la\Ab\ra)$, where `$\mathrm{conc}$' denotes the concatenation. 
\end{remark}
In other words, since deconcantenation $\Delta$ splits any word $w$ 
into the sum of all two-partitions $u\otimes v$ with $u,v\in\Ab^\ast$, 
including when $u$ or $v$ are the empty word $1$, we see that
\begin{equation*}
\bigl(X\,\quas\, Y\bigr)(w)=\sum_{uv=w}X(u)\,\quas\, Y(v).
\end{equation*}
Now consider the following algebra which plays an essential role hereafter,
\begin{equation*}
\Kb\la\Ab\ra_\quas\overline{\otimes}\,\,\Kb\la\Ab\ra.
\end{equation*}
This is the complete tensor product 
of the Hopf quasi-shuffle algebra on the left 
and the Hopf concatenation algebra on the right;
see Reutenauer~\cite[p.~18, 29]{R}.
It is itself an associative Hopf algebra. 
The product of any two elements in this tensored Hopf algebra,
which extends linearly, is naturally given by 
\begin{equation*}
(u\otimes u')(v\otimes v')=(u\,\quas\, u')\otimes(vv').
\end{equation*}
\begin{remark}
In our context, the significance of this tensor algebra
is that it is the natural abstract setting for the flowmap. 
\end{remark}
Any endomorphism $X\in\End(\Kb\la\Ab\ra_\quas)$ 
can be completely described by the image 
in $\Kb\la\Ab\ra_\quas\overline{\otimes}\,\,\Kb\la\Ab\ra$
of the map
\begin{equation*}
X\mapsto\sum_{w\in\Ab^\ast}X(w)\otimes w,
\end{equation*}
see Reutenauer~\cite[p.~29]{R}. Note that the identity endomorphism `$\id$'
on $\Kb\la\Ab\ra_\quas$ maps onto $\sum w\otimes w$.
Indeed the embedding
$\End(\Kb\la\Ab\ra_\quas)\to\Kb\la\Ab\ra_\quas\overline{\otimes}\,\,\Kb\la\Ab\ra$
defined by this map is an algebra homomorphism for the quasi-shuffle 
convolution product. The unit endomorphism $\nu$ on the algebra 
$\End(\Kb\la\Ab\ra_\quas)$ sends non-empty words to zero and the
empty word to itself. This embedding provides a mechanism for 
representing functions of $\sum w\otimes w$ in $\End(\Kb\la\Ab\ra_\quas)$.
Before we demonstrate this, we need the following.
\begin{definition}[Augmented ideal projector]
We use $J$ to denote the augmented ideal projector. This is the linear
endomorphism on $\Kb\la\Ab\ra_\quas$ or $\Kb\la\Ab\ra$ that sends every 
non-empty word to itself and the empty word to zero. From the definition
of the unit endomorphism $\nu$ given above we see that $J\equiv\id-\nu$.
\end{definition}
We observe we can apply a power series function such as the logarithm function
to the element $\sum_ww\otimes w$ in $\Kb\la\Ab\ra_\quas\overline{\otimes}\,\,\Kb\la\Ab\ra$
as follows. If `$1$' represents the empty word, $|w|$ represents the length of the word $w$
and $c_k\coloneq(-1)^{k-1}\frac{1}{k}$ for all $k\in\mathbb N$, then 
by direct computation we find
\begin{align*}
\log\Biggl(\sum_{w\in\Ab^\ast}w\otimes w\Biggr)
&=\sum_{k\geqslant1}c_k\Biggl(\sum_{w\in\Ab^\ast}w\otimes w-1\otimes1\Biggr)^k\\
&=\sum_{k\geqslant1}c_k\Biggl(\sum_{w\in\Ab^\ast\backslash\{1\}}w\otimes w\Biggr)^k\\
&=\sum_{k\geqslant1}c_k\sum_{u_1,\ldots,u_k\in\Ab^\ast\backslash\{1\}}
(u_1\,\quas\,\cdots\,\quas\, u_k)\otimes(u_1\cdots u_k)\\
&=\sum_{w\in\Ab^\ast}\Biggl(\sum_{k=1}^{|w|}c_k
\sum_{u_1\cdots u_k=w}u_1\,\quas\,\cdots\,\quas\, u_k\Biggr)\otimes w\\
&=\sum_{w\in\Ab^\ast}\Biggl(\sum_{k\geqslant1}c_kJ^{\quas k}\Biggr)\circ w\otimes w,
\end{align*}
where $J^{\quas k}$ denotes the $k$th quasi-shuffle convolution power
of the augmented ideal projector $J$. We emphasize the elements in the
partition $u_1\cdots u_k=w$ in the sum in the penultimate line are all non-empty.
Note that words cannot be deconcatenated further than all the letters it contains,
and thus $J^\quas(w)$ is zero if $w$ has length less than $k$. 
We conclude the action of the logarithm function power series on $\sum_ww\otimes w$
can be represented by a corresponding power series endomorphism in 
$\End(\Kb\la\Ab\ra_\quas)$. This will prove useful 
in Sections~\ref{sec:Lie} and \ref{sec:Chen--Strichartz} so 
we summarize the result as follows.
\begin{lemma}[Logarithm convolution power series]\label{lemma:logpowerseries}
The logarithm of the element $\sum_ww\otimes w$ is given by
\begin{equation*}
\log\Biggl(\sum_{w\in\Ab^\ast}w\otimes w\Biggr)=
\sum_{w\in\Ab^\ast}\log^\quas(\id)\circ w\otimes w,
\end{equation*}
where
\begin{equation*}
\log^\quas(\id)\coloneqq\sum_{k\geqslant1}\frac{(-1)^{k-1}}{k}J^{\quas k}.
\end{equation*}
We will often abbreviate $\log^\quas(\id)\circ w$ to $\log^\quas(w)$.
\end{lemma}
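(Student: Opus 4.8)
The plan is to justify the chain of equalities displayed immediately above the lemma, filling in the two points that require care: that the logarithm power series is well defined on $\sum_w w\otimes w$, and that regrouping the resulting iterated sums by the right-hand tensor factor is legitimate and produces exactly the convolution powers $J^{\quas k}$.

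First I would write $\sum_{w\in\Ab^\ast} w\otimes w = 1\otimes 1 + x$ with $x\coloneqq\sum_{w\in\Ab^\ast\backslash\{1\}} w\otimes w$, where $1\otimes 1$ is the unit of the complete tensor Hopf algebra $\Kb\la\Ab\ra_\quas\overline{\otimes}\,\Kb\la\Ab\ra$. This algebra is graded by total word length---the lengths of the two tensor components of $w\otimes w$ agree---and $x$ lies in the part of strictly positive degree. Since the product rule $(u\otimes u')(v\otimes v')=(u\,\quas\, u')\otimes(vv')$ never decreases the length of the right-hand factor, and strictly increases it whenever $u'$ and $v'$ are non-empty, the $k$th power $x^k$ has right-factor length at least $k$. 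Hence for each fixed word $w$ only finitely many $k$ contribute to the coefficient of $(\,\cdot\,)\otimes w$, and $\log(1\otimes1+x)\coloneqq\sum_{k\geqslant1}c_k\, x^k$ is a well-defined element of the completion; the finiteness condition on $\Ab$ moreover guarantees that each individual quasi-shuffle $u_1\,\quas\,\cdots\,\quas\, u_k$ is itself a finite linear combination of words, so every coefficient genuinely lies in $\Kb\la\Ab\ra_\quas$.

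Next I would compute $x^k$ explicitly by iterating the product rule, obtaining
\begin{equation*}
x^k=\sum_{u_1,\ldots,u_k\in\Ab^\ast\backslash\{1\}}(u_1\,\quas\,\cdots\,\quas\, u_k)\otimes(u_1\cdots u_k),
\end{equation*}
and then collect terms according to the value $w=u_1\cdots u_k$ of the concatenation appearing in the right factor. By the previous paragraph this is a finite regrouping in each degree, and it gives
\begin{equation*}
x^k=\sum_{w\in\Ab^\ast}\Biggl(\sum_{\substack{u_1\cdots u_k=w\\ u_i\neq 1}}u_1\,\quas\,\cdots\,\quas\, u_k\Biggr)\otimes w.
\end{equation*}
It remains to recognise the inner sum as $J^{\quas k}(w)$. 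For this I would unwind the definition of the quasi-shuffle convolution product, $(X\,\quas\, Y)(w)=\sum_{uv=w}X(u)\,\quas\, Y(v)$, whose $k$-fold iterate is $J^{\quas k}(w)=\sum_{u_1\cdots u_k=w}J(u_1)\,\quas\,\cdots\,\quas\, J(u_k)$; since $J$ is the identity on non-empty words and annihilates the empty word, only the summands with every $u_i$ non-empty survive, which is precisely the inner sum above. Summing over $k$ with coefficients $c_k=(-1)^{k-1}/k$, and noting that $J^{\quas k}(w)=0$ as soon as $k>|w|$ because a word cannot be split into more non-empty pieces than it has letters, yields
\begin{equation*}
\log\Biggl(\sum_{w\in\Ab^\ast} w\otimes w\Biggr)=\sum_{w\in\Ab^\ast}\Biggl(\sum_{k\geqslant1}c_k\,J^{\quas k}\Biggr)\circ w\otimes w=\sum_{w\in\Ab^\ast}\log^\quas(\id)\circ w\otimes w.
\end{equation*}

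The only genuine obstacle is bookkeeping: making rigorous the interchange of the infinite sum over $k$ with the regrouping over $w$. This is dispatched once and for all by working in the completion of the length-graded algebra, where each homogeneous component involves only finitely many terms, so no analytic convergence is at stake---the ``series'' are in fact finite in each degree. An alternative, shorter route is to invoke the fact recorded earlier in the excerpt that the embedding $\End(\Kb\la\Ab\ra_\quas)\to\Kb\la\Ab\ra_\quas\overline{\otimes}\,\Kb\la\Ab\ra$ is an algebra homomorphism for the quasi-shuffle convolution product, sending $\id$ to $\sum_w w\otimes w$ and $J=\id-\nu$ to $x$; then $\log^\quas(\id)=\sum_{k\geqslant1}c_k\,J^{\quas k}$ maps term-by-term onto $\log(\sum_w w\otimes w)$, once one checks compatibility of the homomorphism with the two completions---which again reduces to the same length-grading argument.
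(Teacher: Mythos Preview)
Your proposal is correct and follows essentially the same approach as the paper: the paper's ``proof'' is precisely the chain of equalities you set out to justify, presented as a direct computation immediately preceding the lemma, with the lemma itself merely summarising the result. You supply more careful detail on well-definedness in the completion and on why the inner sum coincides with $J^{\quas k}(w)$, but the logical structure---write $\sum_w w\otimes w=1\otimes1+x$, expand $x^k$ via the tensor-product rule, regroup by the right factor, and identify $J^{\quas k}$---is identical to what the paper does.
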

We also note that equivalently, the embedding 
$\End(\Kb\la\Ab\ra)\to\Kb\la\Ab\ra_\quas\overline{\otimes}\,\,\Kb\la\Ab\ra$
given by 
\begin{equation*}
Y\mapsto\sum_{w\in\Ab^\ast}w\otimes Y(w),
\end{equation*}
is an algebra homomorphism for the concantenation convolution product.
\begin{definition}[Adjoint endomorphisms]\label{def:adjoints1}
Two endomorphisms $X$ and $Y$ are adjoints if the images of
\begin{equation*}
X\mapsto \sum_{w}w\otimes X(w)
\qquad\text{and}\qquad
Y\mapsto \sum_{w}Y(w)\otimes w,
\end{equation*}
are equal. Hereafter we use $X^\dag$ to denote the adjoint of $X$.
\end{definition}
This coincides with $X$ and $X^\dag$ 
being adjoints in the sense 
$\bigl\langle X^{\dag}(u),v\bigr\rangle
=\bigl\langle u,X(v)\bigr\rangle$ 
for all $u,v\in\Ab^\ast$; see Reutenauer~\cite[Section~1.5]{R}.

There is a natural isomorphism between the Hopf shuffle and quasi-shuffle
algebras discovered by Hoffmann~\cite{H} which will play an important
role here; also see Foissy, Patras and Thibon~\cite{FPT} for a theoretical perspective.
To describe the isomorphism succinctly we need to introduce
the notion of composition action on words. For any natural number $n$, we use 
$\CC(n)$ to denote the set of compositions
of $n$. A given composition $\lambda$ in $\CC(n)$ will have say $\ell\leqslant n$
components so $\lambda=(\lambda_1,\ldots,\lambda_\ell)$. For such a
$\lambda$ we define the following simple multi-index functions, $|\lambda|\coloneqq \ell$
as well as 
\begin{equation*}
\Sigma(\lambda)\coloneqq \lambda_1+\cdots+\lambda_\ell,\quad
\Pi(\lambda)\coloneqq \lambda_1\cdots\lambda_\ell
\quad\text{and}\quad
\Gamma(\lambda)\coloneqq \lambda_1!\cdots\lambda_\ell!.
\end{equation*}
We now define the composition 
action on words and the exponential map from Hoffman~\cite{H}.
\begin{definition}[Composition action]
For a given word $w=a_1\cdots a_n$ and composition 
$\lambda=(\lambda_1,\ldots,\lambda_\ell)$ in $\CC(n)$
we define the action $\lambda\circ w$ to be
\begin{equation*}
\lambda\circ w\coloneqq 
\lb a_1\cdots a_{\lambda_1}\rb
\lb a_{\lambda_1+1}\cdots a_{\lambda_1+\lambda_2}\rb\cdots
\lb a_{\lambda_1+\cdots+\lambda_{\ell-1}+1}\cdots a_{n}\rb,
\end{equation*}
where the brackets are concatenated.
Here for any word $w=a_1\cdots a_n$ the notation $\lb w\rb$ denotes
the $n$-fold nested bracket described above. 
\end{definition}
\begin{definition}[Hoffman exponential]
We define the map $\exp_\rH\colon\Kb\la\Ab\ra_\sh\to\Kb\la\Ab\ra_\quas$
as that which leaves the empty word unchanged and for any non-empty word $w$ is
\begin{equation*}
\exp_\rH(w)\coloneqq\sum_{\lambda\in\CC(|w|)}\frac{1}{\Gamma(\lambda)}\lambda\circ w.
\end{equation*}
Its inverse 
$\log_\rH\colon\Kb\la\Ab\ra_\quas\to\Kb\la\Ab\ra_\sh$
for any word $w$ is given by 
\begin{equation*}
\log_\rH(w)\coloneqq\sum_{\lambda\in\CC(|w|)}
\frac{(-1)^{\Sigma(\lambda)-|\lambda|}}{\Pi(\lambda)}\lambda\circ w.
\end{equation*}
\end{definition}
Hoffman~\cite{H} proved the exponential map is an isomorphism 
from the Hopf shuffle algebra $\Kb\la\Ab\ra_{\sh}$ 
to the Hopf quasi-shuffle algebra $\Kb\la\Ab\ra_{\quas}$.
\begin{example}
In the case of the word $w=a_1a_2a_3$ the Hoffman exponential is given by
$\exp_{\rH}(a_1a_2a_3)=a_1a_2a_3+\frac12[a_1,a_2]a_3+\frac12a_1[a_2,a_3]+\frac16[a_1,a_2,a_3]$.
\end{example}
%
%
The adjoint of the Hoffman exponential 
$\exp_{\rH}^\dag\colon\Kb\la\Ab\ra_{\mathrm{conc},\Delta'}\to\Kb\la\Ab\ra_{\mathrm{conc},\delta}$
is an isomorphism and defined explicitly as follows. Here we distinguish between two Hopf
algebras with concatenation as product, we have $\Kb\la\Ab\ra_{\mathrm{conc},\Delta'}$
with de-quasi-shuffle $\Delta'$ as coproduct and $\Kb\la\Ab\ra_{\mathrm{conc},\delta}$ 
with deshuffle $\delta$ as coproduct. 
Then for any letter $a$ from the alphabet $\Ab$ we have 
\begin{equation*}
\exp_{\rH}^\dag(a)\coloneqq\sum_{n\geqslant1}\sum_{[a_1,\ldots,a_n]=a}\frac{1}{n!}a_1\ldots a_n.
\end{equation*}
See Hoffman~\cite{H} for more details. 
Note $\exp_{\rH}^\dag$ is a homomorphism for the concatenation product.
Its inverse 
$\log_{\rH}^\dag\colon\Kb\la\Ab\ra_{\mathrm{conc},\delta}\to\Kb\la\Ab\ra_{\mathrm{conc},\Delta'}$ 
for any letter $a$ from the alphabet as above is 
\begin{equation*}
\log_{\rH}^\dag(a)\coloneqq\sum_{n\geqslant1}\sum_{[a_1,\ldots,a_n]=a}\frac{(-1)^{n-1}}{n}a_1\ldots a_n.
\end{equation*} 
We note $\Kb\la\Ab\ra_{\sh,\Delta}$ and $\Kb\la\Ab\ra_{\mathrm{conc},\delta}$ 
are dual Hopf algebras as well as
$\Kb\la\Ab\ra_{\quas,\Delta}$ and $\Kb\la\Ab\ra_{\mathrm{conc},\Delta'}$. 

\section{Exponential Lie series for continuous semimartingales}\label{sec:Lie}
We show the logarithm of the flowmap for a system of stochastic differential equations,
driven by a set of $d$ continuous semimartingales $X^i_t$ and governed
by an associated set of non-commuting vector fields $V_i$, for $i=1,\ldots,d$, 
can be expressed as a Lie series. 
We begin by emphasizing that for a given set of orthogonal continuous semimartingales,
the only non-zero quadratic variations are those of the form $[X^i,X^i]$ for $i=1,\ldots,d$.
In particular all third order $[X^i,X^i,X^i]$ and thus higher order variations are zero.
Hence the generator $[\,\cdot\,,\,\cdot\,]$, for example in the underlying quasi-shuffle 
algebra, is nilpotent of degree $3$. 
\begin{remark}
Examples of continuous (local) martingales are Brownian motion, time-changed Brownian motion 
and stochastic integrals of Brownian motion. The representation results
of Doob, of Dambis and of Dubins and Schwarz, and of Knight 
(see Theorems 3.4.2, 3.4.6 and 3.4.13 in Karatzas and Shreve~\cite{KS})
show that these examples are fundamental. 
Hence important examples for continuous semimartingales  
are those just mentioned to which a continuous finite variation process, 
i.e.~the difference of two real-valued continuous increasing processes, is added.
\end{remark}
We saw in Section~\ref{sec:flowmap} that the It\^o flowmap has the form $\sum_wI_wD_w$
where the sum is over all words constructed from the alphabet $\Ab$ which contains
the letters $1,\ldots,d$ as well as the letters $[1,1],\ldots,[d,d]$. 
We proposed there, to represent the It\^o flowmap by the abstract expression
$\sum_w w\otimes w$ in $\Kb\la\Ab\ra_{\quas}\otimes\Kb\la\Ab\ra$, the tensor algebra of the
Hopf quasi-shuffle and concatenation algebras. We now make this more precise.
Let $\mathbb I$ denote the algebra generated by multiple It\^o integrals 
$I_w=I_w(t)$ with respect to the semimartingales $\{X^1,\ldots,X^d\}$ or any nested
quadratic variation processes generated from them, and constant random variable $1$.
\begin{definition}[It\^o word-to-integral map]
We denote by $\mu\colon\Kb\la\Ab\ra_{\quas}\to\mathbb I$ the word-to-integral map
$\mu\colon w\mapsto I_w$ assigning each word $w\in\Ab^\ast$ to the corresponding
multiple It\^o integral $I_w$.
\end{definition}
The It\^o word-to-integral map is a quasi-shuffle algebra homomorphism, 
i.e.\/ we have $\mu(u\,\quas\, v)=\mu(u)\mu(v)$ for any $u,v\in\Ab^\ast$; 
see Curry, Ebrahimi--Fard, Malham and Wiese~\cite{CE-FMW}.
Let $\mathbb D$ denote the algebra of scalar linear 
partial differential operators that can be constructed by 
composition from the partial differential operators $D_i$.
\begin{definition}[It\^o word-to-partial differential operator map]
We denote by $\bar{\mu}\colon\Kb\la\Ab\ra\to\mathbb D$ the letter-to-partial 
differential operator map $\bar\mu\colon i\mapsto D_i$ assigning each letter 
$i\in\Ab$ to the corresponding operator $D_i$. Recall the 
operators $D_i$ are given, for $i=1,\ldots,d$, by
$D_i\coloneqq V_i\cdot\pa$ and $D_{[i,i]}\coloneqq \tfrac12 V_{i}\otimes V_{i}\colon\pa^2$.
\end{definition}
The map $\bar{\mu}$ is a concatenation algebra homomorphism, i.e.\/ 
we have $\bar{\mu}(uv)=\bar{\mu}(u)\bar{\mu}(v)$ for any $u,v\in\Ab^\ast$.
Naturally 
$\mu\otimes\bar{\mu}\colon\Kb\la\Ab\ra_\quas\otimes\Kb\la\Ab\ra\to\mathbb I\otimes\mathbb D$
is also an algebra homomorphism. With this homomorphism construction in place, 
we observe
\begin{equation*}
\sum_w I_w\otimes D_w=(\mu\otimes\bar{\mu})\circ\biggl(
\sum_w w\otimes w\biggr).
\end{equation*}
In principle we can compute
the logarithm of $\sum_w w\otimes w$ in $\Kb\la\Ab\ra_\quas\otimes\Kb\la\Ab\ra$
in the form of the quasi-shuffle logarithm as outlined in
Section~\ref{sec:algebra}. 
However the basis in which we expand the flowmap and its logarithm
corresponds, on the right hand side, to the terms $D_w$ in $\mathbb D$ which are
compositions of the vector fields $V_i\cdot\pa$ and 
second order partial differential operators $\tfrac12 V_{i}\otimes V_{i}\colon\pa^2$ 
for $i=1,\ldots,d$. The question is, how can we express the 
logarithm of the flowmap in Lie polynomials or in particular, 
in Lie brackets of vector fields? The natural resolution 
is to use the Fisk--Stratonovich representation of the flowmap.
From the stochastic analysis perspective the procedure is as follows. 
\begin{definition}[Fisk--Stratonovich integral]
For continuous semimartingales $H$ and $Z$, the 
Fisk--Stratonovich integral is defined as 
\begin{equation*}
\int_0^tH_\tau\strat\rd Z_\tau\coloneqq \int_0^tH_\tau\,\rd Z_\tau+\tfrac12[H,Z]_t,
\end{equation*}
where the `$\strat$' indicates Fisk--Stratonovich integration; 
see Protter~\cite[p.~216]{Protter}.
\end{definition}
\begin{lemma}[It\^o to Fisk--Stratonovich conversion]\label{lemma:conv}
For $i=1,\ldots,d$ and any function $f\colon\R^N\to\R$, for 
the integral term in the It\^o chain rule we have
\begin{align*}
\int_0^t\bigl(V_i\cdot\pa\bigr)f(Y_\tau)\,\rd X^i_\tau
&=\int_0^t\bigl(V_i\cdot\pa\bigr)f(Y_\tau)\strat\rd X^i_\tau
-\tfrac12\bigl[\bigl(V_i\cdot\pa\bigr)f(Y),X^i\bigr]_t,
\intertext{and}
\bigl[\bigl(V_i\cdot\pa\bigr)f(Y),X^i\bigr]_t
&=\int_0^t\bigl(V_i\cdot\pa\bigr)\bigl(V_i\cdot\pa\bigr)f(Y_\tau)\,\rd\,[X^i,X^i]_\tau.
\end{align*}
\end{lemma}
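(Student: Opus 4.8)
The plan is to derive both identities directly from the definition of the Fisk--Stratonovich integral together with standard properties of the It\^o integral and quadratic covariation, as collected for instance in Protter~\cite{Protter}. For the first identity, I would apply the defining relation
\begin{equation*}
\int_0^tH_\tau\strat\rd Z_\tau=\int_0^tH_\tau\,\rd Z_\tau+\tfrac12[H,Z]_t
\end{equation*}
with the particular choice $H_\tau=\bigl(V_i\cdot\pa\bigr)f(Y_\tau)$ and $Z_\tau=X^i_\tau$. Rearranging this equation immediately isolates the It\^o integral $\int_0^t\bigl(V_i\cdot\pa\bigr)f(Y_\tau)\,\rd X^i_\tau$ as the Fisk--Stratonovich integral minus $\tfrac12\bigl[\bigl(V_i\cdot\pa\bigr)f(Y),X^i\bigr]_t$, which is exactly the claimed formula. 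The only thing that needs checking here is that $H$ is indeed a continuous semimartingale so that the Fisk--Stratonovich integral is defined; this follows because $g(Y_\tau)$ is a continuous semimartingale for any sufficiently smooth $g$ (here $g=\bigl(V_i\cdot\pa\bigr)f$) by the It\^o chain rule already recorded in Section~\ref{sec:flowmap}, the vector fields and $f$ being assumed smooth.

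For the second identity, the key observation is that the bracket $\bigl[\bigl(V_i\cdot\pa\bigr)f(Y),X^i\bigr]$ only picks up the martingale/quadratic-variation part of $\bigl(V_i\cdot\pa\bigr)f(Y)$ against $X^i$. Writing $g\coloneqq\bigl(V_i\cdot\pa\bigr)f$ and applying the It\^o chain rule to $g(Y_\tau)$ gives
\begin{equation*}
g(Y_t)=g(Y_0)+\sum_{j=1}^d\int_0^t\bigl(V_j\cdot\pa\bigr)g(Y_\tau)\,\rd X^j_\tau
+\tfrac12\sum_{j=1}^d\int_0^t\bigl(V_j\otimes V_j\colon\pa^2\bigr)g(Y_\tau)\,\rd[X^j,X^j]_\tau.
\end{equation*}
Now take the quadratic covariation of both sides with $X^i$. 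The constant term $g(Y_0)$ contributes nothing; each finite-variation term $\int_0^t(\,\cdots)\,\rd[X^j,X^j]_\tau$ contributes nothing since quadratic covariation with any continuous finite-variation process vanishes; and by the assumed orthogonality $[X^j,X^i]=0$ for $j\neq i$, only the $j=i$ stochastic integral survives. Using the standard rule $\bigl[\int_0^{\cdot}K_\tau\,\rd X^i_\tau,\,X^i\bigr]_t=\int_0^tK_\tau\,\rd[X^i,X^i]_\tau$ with $K_\tau=\bigl(V_i\cdot\pa\bigr)g(Y_\tau)=\bigl(V_i\cdot\pa\bigr)\bigl(V_i\cdot\pa\bigr)f(Y_\tau)$ yields precisely the second claimed formula.

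I do not expect any serious obstacle here: both parts are essentially bookkeeping with the definition of $\strat$ and the elementary algebra of quadratic covariation. The one point that deserves a careful sentence is the reduction in the second identity, namely justifying that \emph{all} the drift-type and cross terms drop out of $\bigl[g(Y),X^i\bigr]$ — this is where the hypotheses that the $X^i$ are orthogonal (so $[X^i,X^j]=0$ for $i\neq j$) and that the added finite-variation parts do not contribute to quadratic covariation are used. Once that is noted, the computation closes. It may also be worth remarking that these two identities together are exactly what is needed to rewrite the It\^o chain rule in Fisk--Stratonovich form, feeding into the expansion of the flowmap in Section~\ref{sec:Lie}.
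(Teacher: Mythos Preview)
Your proposal is correct and follows essentially the same route as the paper: the paper's proof also sets $H=(V_i\cdot\pa)f(Y)$ and $Z=X^i$ in the Fisk--Stratonovich definition for the first identity, and for the second substitutes the It\^o chain rule for $(V_i\cdot\pa)f(Y)$ into the bracket and discards the irrelevant terms. Your treatment is more explicit about which terms vanish and why (orthogonality of the $X^j$ and the finite-variation nature of the $[X^j,X^j]$ integrals), whereas the paper phrases the same reductions more tersely as ``nilpotency of degree~3'' and ``zero if any argument is constant''.
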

\begin{proof}
To establish the first result we set $H=\bigl(V_i\cdot\pa\bigr)f(Y)$
and $Z=X^i$ in the definition for Fisk--Stratonovich integrals above.
For the second result we use the It\^o chain rule in Section~\ref{sec:flowmap} to
substitute for $\bigl(V_i\cdot\pa\bigr)f(Y_t)$ into the quadratic covariation bracket
on the left. Then using that the bracket
is nilpotent of degree $3$ for continuous semimartingales, and zero if any argument 
is constant, establishes the result. 
\qed
\end{proof}
Substituting the product rule
\begin{equation*}
\bigl(V_i\cdot\pa\bigr)\bigl(V_i\cdot\pa\bigr)=
(V_i\otimes V_i)\colon\pa^2+(V_i\cdot\pa V_i)\cdot\pa
\end{equation*}
into the second result and that itself into the first result 
in Lemma~\ref{lemma:conv}, generates the Fisk--Stratonovich chain rule.
\begin{corollary}[Fisk--Stratonovich chain rule]
For any function $f\colon\R^N\to\R$ we have
\begin{equation*}
f(Y)\!=\!f(Y_0)+\!\sum_{i=1}^d\int_0^t\!\!\bigl(V_i\cdot\pa\bigr)f(Y_\tau)\strat\!\rd X^i_\tau
-\tfrac12\!\sum_{i=1}^d\!\int_0^t\!\!\bigl((V_i\cdot\pa V_i)\cdot\pa\bigr)f(Y_\tau)\rd[X^i,X^i]_\tau.
\end{equation*}
\end{corollary}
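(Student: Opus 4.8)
The plan is to derive the Fisk--Stratonovich chain rule directly by substituting the two identities of Lemma~\ref{lemma:conv} into the It\^o chain rule of Section~\ref{sec:flowmap} and then simplifying the second-order term using the stated product rule. First I would take the It\^o chain rule
\begin{equation*}
f(Y_t)=f(Y_0)+\sum_{i=1}^d\int_0^t\bigl(V_i\cdot\pa\bigr)f(Y_\tau)\,\rd X_\tau^i
+\tfrac12\sum_{i=1}^d\int_0^t\bigl(V_i\otimes V_i\colon\pa^2\bigr)f(Y_\tau)\,\rd[X^i,X^i]_\tau,
\end{equation*}
and replace each first-order integral $\int_0^t(V_i\cdot\pa)f(Y_\tau)\,\rd X_\tau^i$ by its Fisk--Stratonovich counterpart using the first identity of Lemma~\ref{lemma:conv}, which introduces the correction $-\tfrac12[(V_i\cdot\pa)f(Y),X^i]_t$ for each $i$.

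Next I would rewrite each such correction bracket via the second identity of Lemma~\ref{lemma:conv} as $\int_0^t(V_i\cdot\pa)(V_i\cdot\pa)f(Y_\tau)\,\rd[X^i,X^i]_\tau$, so that after collecting terms the whole expression involves only Fisk--Stratonovich integrals against the $X^i$ plus It\^o integrals against the finite-variation processes $[X^i,X^i]$. At this point I would substitute the product rule
\begin{equation*}
\bigl(V_i\cdot\pa\bigr)\bigl(V_i\cdot\pa\bigr)=(V_i\otimes V_i)\colon\pa^2+(V_i\cdot\pa V_i)\cdot\pa,
\end{equation*}
into the coefficient of $\rd[X^i,X^i]_\tau$. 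The two second-order pieces $-\tfrac12(V_i\otimes V_i\colon\pa^2)$ coming from the correction and $+\tfrac12(V_i\otimes V_i\colon\pa^2)$ coming from the original It\^o second-order term cancel exactly, leaving only $-\tfrac12\bigl((V_i\cdot\pa V_i)\cdot\pa\bigr)f(Y_\tau)$ against $\rd[X^i,X^i]_\tau$, which is the claimed formula.

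This is essentially a bookkeeping computation, so there is no deep obstacle; the one point requiring a little care is ensuring the signs and the factors of $\tfrac12$ are tracked correctly through the substitution, in particular verifying that the second-order operator terms cancel rather than reinforce. I would also note in passing that the use of the second identity of Lemma~\ref{lemma:conv} is exactly where the hypothesis that the $X^i$ are continuous semimartingales (so that the bracket $[\,\cdot\,,\,\cdot\,]$ is nilpotent of degree $3$ and annihilates constants) is invoked, since that is what makes the correction bracket reduce to a single integral against $\rd[X^i,X^i]$ with no further terms. Since $f$ is arbitrary and smooth, the resulting identity is precisely the Fisk--Stratonovich chain rule as stated, completing the proof.
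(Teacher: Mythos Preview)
Your proposal is correct and follows essentially the same approach as the paper: the paper simply states that substituting the product rule into the second identity of Lemma~\ref{lemma:conv}, then that into the first identity, and finally into the It\^o chain rule yields the result, which is exactly the chain of substitutions you spell out in more detail. The cancellation of the $(V_i\otimes V_i)\colon\pa^2$ terms and the role of the continuity hypothesis via nilpotency of the bracket are precisely the points the paper is relying on.
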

We emphasize, the differential operator in second term 
on the right is a vector field. In addition, 
the usual rules of calculus apply to Fisk--Stratonovich integrals. 
As in the Section~\ref{sec:flowmap} for the It\^o case, we extend the driving 
continuous semimartingales and governing vector fields as follows.
For $i=1,\ldots,d$ we set $X^{[i,i]}\coloneqq[X^{i},X^{i}]$ as before,
however we now set $V_{[i,i]}\coloneqq-\tfrac12\bigl(V_{i}\cdot\pa V_{i})\cdot\pa$.
Then the chain rule above generates the 
Fisk--Stratonovich representation for the flowmap
\begin{equation*}
\sum_w J_wV_w,
\end{equation*}
where the sum is over all words $w$ constructed from the alphabet given by  
$\Ab\coloneqq\{1,\ldots,d,[1,1],\ldots,[d,d]\}$. 
The multiple stochastic Fisk--Stratonovich integrals 
$J_w=J_w(t)$ are defined over the same simplex as for the multiple It\^o integrals 
but with each nested integration interpreted in the Fisk--Stratonovich sense. 
The basis terms $V_w$ are compositions of vectors fields from the 
alphabet $\Ab$ with the assignment of each letter to each vector
field as outlined above. 
We define the Fisk--Stratonovich word-to-integral map $\nu\colon\Kb\la\Ab\ra_{\sh}\to\mathbb J$ 
by $\nu\colon w\mapsto J_w$, in a similar manner to that for the It\^o word-to-integral map. 
Here $\mathbb J$ denotes the algebra generated by multiple Fisk--Stratonovich integrals 
with respect to the semimartingales $\{X^1,\ldots,X^d\}$ or any 
quadratic variation processes $[X^i,X^i]$ generated from them, and constant random variable $1$.
Note the alphabet $\Ab^\ast$ 
contains all the letters $1,\ldots,d$ and $[1,1],\ldots,[d,d]$.
Let $\mathbb V$ denote the set of partial differential operators 
constructed by composition from the vector fields $V_i$ and $V_{[i,i]}$ 
for $i=1,\ldots,d$.
\begin{definition}[Word-to-vector field map]
We denote by $\bar{\nu}\colon\Kb\la\Ab\ra\to\mathbb V$ the letter-to-vector field map
$\bar\nu\colon i\mapsto V_i$ assigning each letter 
$i\in\Ab$ to the corresponding vector field $V_i$ and each letter $[i,i]\in\Ab$
to the corresponding vector field $V_{[i,i]}$, for $i=1,\ldots,d$.  
\end{definition}
The Fisk--Stratonovich word-to-integral map $\nu$ is a shuffle algebra homomorphism
while the word-to-vector field map $\bar\nu$ is a concatenation algebra homomorphism.
Hence the natural abstract setting for the Fisk--Stratonovich representation
of the flowmap is the complete tensor algebra 
$\Kb\la\Ab\ra_{\sh}\overline{\otimes}\,\Kb\la\Ab\ra$. The map 
$\nu\otimes\bar\nu\colon\Kb\la\Ab\ra_{\sh}\overline{\otimes}\,
\Kb\la\Ab\ra\to\mathbb J\otimes\mathbb V$
is an algebra homomorphism. 

The Fisk--Stratonovich and It\^o representations for the flowmap must
coincide and so must their logarithms. 
The Fisk--Stratonovich integrals $J_w$ satisfy the usual
rules of calculus, see Protter~\cite{Protter}, while the basis terms $V_w$ 
are compositions of vectors fields. Hence the Chen--Strichartz formula applies 
to the Fisk--Stratonovich representation for the flowmap, 
see Strichartz~\cite{S}. Recall from Lemma~\ref{lemma:logpowerseries}
the quasi-shuffle convolution logarithm of the identity $\log^\quas(\id)$
can be expressed as a power series in the augmented ideal projector $J$.
When the quasi-shuffle convolution product reduces to the  
shuffle convolution product, also denoted $\sh$, the 
corresponding shuffle convolution power series 
$\log^{\scriptsize\sh}(\id)$ is given by 
$J-\frac12J^{\scriptsize\sh\,2}+\frac13J^{\scriptsize\sh\,3}+\cdots$. 
\begin{theorem}[Chen--Strichartz Lie series]
The logarithm of the flowmap has the series representation 
\begin{equation*}
\log\biggl(\sum_w J_wV_w\biggr)=\sum_w \frac{1}{|w|}J_{\log^{\scriptsize\sh}(w)}V_{[w]_{\cL}}
\end{equation*}
where $J_{\log^{\scriptsize\sh}(w)}=\nu\circ\log^{\scriptsize\sh}(w)$ and
\begin{equation*}
\log^{\scriptsize\sh}(w)
=\sum_{\sigma\in\Sb_{|w|}}c_\sigma\,\sigma^{-1}(w)
\qquad\text{with}\qquad
c_\sigma\coloneqq\frac{(-1)^{\mathrm{d}(\sigma)}}{|\sigma|}
\begin{pmatrix} |\sigma|-1 \\ \mathrm{d}(\sigma) \end{pmatrix}^{-1}.
\end{equation*}
Here for any word $w=a_1\cdots a_n\in\Ab^\ast$ the basis terms are given by
$V_{[w]_{\cL}}\coloneqq[V_{a_1},[V_{a_2},\ldots,[V_{a_{n-1}},V_{a_n}]_{\cL}\cdots]_{\cL}]_{\cL}$,
where $[\,\cdot\,,\,\cdot\,]_{\cL}$ is the Lie bracket, 
are Lie polynomials. The non-negative integers
$\mathrm{d}(\sigma)$ denote the number of descents in the permutation $\sigma$.
\end{theorem}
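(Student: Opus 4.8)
The plan is to reduce the statement to the classical deterministic Chen--Strichartz formula, applied to the \emph{shuffle} side of the Fisk--Stratonovich picture, and then transport it through the homomorphisms established above. First I would observe that, by the corollary to Lemma~\ref{lemma:conv}, the flowmap admits the Fisk--Stratonovich representation $\sum_w J_w V_w$, and that $\nu\colon\Kb\la\Ab\ra_{\sh}\to\mathbb J$ is a shuffle homomorphism while $\bar\nu\colon\Kb\la\Ab\ra\to\mathbb V$ is a concatenation homomorphism, so that $\sum_w J_w V_w=(\nu\otimes\bar\nu)\bigl(\sum_w w\otimes w\bigr)$ inside $\Kb\la\Ab\ra_{\sh}\overline{\otimes}\,\Kb\la\Ab\ra$. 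Since both maps are algebra homomorphisms (and $\nu$ moreover kills the empty word appropriately), taking $\log$ commutes with applying $\nu\otimes\bar\nu$: it suffices to identify $\log\bigl(\sum_w w\otimes w\bigr)$ in the \emph{shuffle} tensor algebra and then push it forward.

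Next I would invoke the shuffle analogue of Lemma~\ref{lemma:logpowerseries}: replacing the quasi-shuffle convolution by the shuffle convolution, one gets $\log\bigl(\sum_w w\otimes w\bigr)=\sum_w \log^{\scriptsize\sh}(\id)\circ w\otimes w$, where $\log^{\scriptsize\sh}(\id)=\sum_{k\geqslant1}\frac{(-1)^{k-1}}{k}J^{\scriptsize\sh\,k}$. The content of the theorem is then twofold. On the left tensor factor, I would use the classical fact (Reutenauer~\cite{R}, or the original Chen/Strichartz references) that the $k$th shuffle convolution power of the augmented ideal projector, evaluated on a word $w$ of length $n$, can be rewritten as a sum over permutations: $\log^{\scriptsize\sh}(\id)\circ w=\frac{1}{n}\sum_{\sigma\in\Sb_n}c_\sigma\,\sigma^{-1}(w)$ with $c_\sigma=\frac{(-1)^{\mathrm d(\sigma)}}{|\sigma|}\binom{|\sigma|-1}{\mathrm d(\sigma)}^{-1}$, the key combinatorial identity being that the number of ways to split $w$ into $k$ nonempty pieces whose shuffle-convolution contributes $\sigma^{-1}(w)$ is controlled by the descent statistic $\mathrm d(\sigma)$, yielding the inverse-binomial weights after summing the $c_k=(-1)^{k-1}/k$ over the appropriate range of $k$. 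On the right tensor factor, I would apply the Dynkin--Friedrichs--Specht--Wever theorem: the coefficient endomorphism $\log^{\scriptsize\sh}(\id)$ is a projector onto (a scalar multiple of) the free Lie algebra, so $\sum_w\log^{\scriptsize\sh}(\id)\circ w\otimes w$ can be re-expressed with the right factor replaced by the right-nested Lie bracket $[w]_{\cL}$ and a compensating factor $1/|w|$; more precisely, for any Lie-like element the right leg may be symmetrized to the Dynkin bracket, which is exactly $V_{[w]_{\cL}}$ after applying $\bar\nu$.

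Assembling these, applying $\nu\otimes\bar\nu$ gives $\log\bigl(\sum_w J_wV_w\bigr)=\sum_w\frac{1}{|w|}J_{\log^{\scriptsize\sh}(w)}V_{[w]_{\cL}}$ with $J_{\log^{\scriptsize\sh}(w)}=\nu\circ\log^{\scriptsize\sh}(w)$ and $\log^{\scriptsize\sh}(w)=\sum_{\sigma\in\Sb_{|w|}}c_\sigma\,\sigma^{-1}(w)$, which is the claimed formula. The main obstacle I anticipate is the combinatorial bookkeeping in the permutation rewriting of $\log^{\scriptsize\sh}(\id)\circ w$: one must carefully track how compositions $u_1\cdots u_k=w$ interact with iterated shuffle convolution to produce each rearrangement $\sigma^{-1}(w)$, and verify that the alternating-harmonic weights $c_k$ collapse precisely to the inverse binomial coefficients indexed by descents. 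A secondary point requiring care is the order of operations — applying $\log$ before versus after $\nu\otimes\bar\nu$ — which is legitimate only because $\nu$ is a shuffle homomorphism sending the empty word to $1$ and nonempty words into the augmentation ideal, so that the convolution logarithm is preserved; I would state this compatibility explicitly rather than leave it implicit, and note that the It\^o and Fisk--Stratonovich flowmaps coincide so the resulting Lie series is the logarithm of the original It\^o flowmap as well.
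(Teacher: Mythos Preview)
Your approach is essentially that of the paper: pass to the abstract shuffle tensor algebra via $\nu\otimes\bar\nu$, compute $\log$ there as $\sum_w\log^{\scriptsize\sh}(w)\otimes w$ via the shuffle analogue of Lemma~\ref{lemma:logpowerseries}, invoke the descent/permutation formula for $\log^{\scriptsize\sh}$ (which the paper establishes in Section~\ref{sec:Chen--Strichartz}, Corollary~\ref{cor:logonwordscomb}), and then use Dynkin--Friedrichs--Specht--Wever to replace the right leg by $\frac{1}{|w|}[w]_{\cL}$.

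One step needs sharpening. It is not $\log^{\scriptsize\sh}(\id)$ on the shuffle side that projects onto the free Lie algebra, but its \emph{adjoint}, the concatenation-convolution Eulerian idempotent $\log(\id)$ on the concatenation side. The paper makes this transfer explicit: by adjointness (Definition~\ref{def:adjoints1}) one has $\sum_w\log^{\scriptsize\sh}(w)\otimes w=\sum_w w\otimes\log(w)$; since $\log(w)$ is a Lie element, DFSW gives $\theta\bigl(\log(w)\bigr)=\log(w)$; transferring back yields $\sum_w\log^{\scriptsize\sh}(w)\otimes\theta(w)=\sum_w\frac{1}{|w|}\log^{\scriptsize\sh}(w)\otimes[w]_{\cL}$. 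Your phrase ``for any Lie-like element the right leg may be symmetrized to the Dynkin bracket'' hides exactly this adjoint mechanism, and without it the passage from $\sum_w\log^{\scriptsize\sh}(w)\otimes w$ to the bracketed form is not justified. A minor slip: your intermediate formula $\log^{\scriptsize\sh}(\id)\circ w=\tfrac{1}{n}\sum_\sigma c_\sigma\,\sigma^{-1}(w)$ carries a spurious $1/n$; the factor $1/|w|$ enters only through the Dynkin idempotent $\theta$, not through $\log^{\scriptsize\sh}$ itself.
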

\begin{proof} 
First, since the Fisk--Stratonovich multiple integrals satisfy the usual rules of 
calculus, the underlying product is the shuffle product $\sh\,$; this is a special case
of the quasi-shuffle product for which the quadratic variation generator 
$[\,\cdot\,,\,\cdot\,]$ is identically zero. Hence we can emulate the derivation
of the quasi-shuffle convolution given in Section~\ref{sec:algebra} to show that 
\begin{align*}
\log\biggl(\sum J_wV_w\bigr)&=\log\circ(\nu\otimes\bar\nu)\circ\biggl(\sum w\otimes w\biggr)\\
&=(\nu\otimes\bar\nu)\circ\log\circ\biggl(\sum w\otimes w\biggr)\\
&=(\nu\otimes\bar\nu)\circ\biggl(\sum \log^{\scriptsize\sh}(\id)\circ w\otimes w\biggr),
\end{align*}
where the sums are over all words $w\in\Ab^\ast$. 
The expression $\log^\sh(\id)$ is the shuffle convolution power series 
for the logarithm on the identity described above.
That this power series has the equivalent expansion in terms of inverse 
permutations with the form for the coefficients $c_\sigma$ shown, 
is proved in Section~\ref{sec:Chen--Strichartz} for $\log^{\quas}(\id)$. 
See in particular Corollary~\ref{cor:logonwordscomb}.  
Note that the coefficients $J_{\log^{\scriptsize\sh}(w)}$ then refer to the 
linear combination of multiple Fisk--Stratonovich integrals enumerated by 
the words generated by the set of permutations shown.

Second, we express the logarithm of the 
Fisk--Stratonovich flowmap in terms of Lie polynomials as shown.
The crucial observation here is that the adjoint of the shuffle convolution logarithm 
$\log^{\scriptsize\sh}(\id)$ is the concatenation convolution logarithm 
on the identity (computed with the deshuffle coproduct $\delta$) 
which we denote by $\log(\id)$. 
This follows as for any integer $k\geqslant1$, 
the adjoint of $J^{\scriptsize{\sh} k}$ is the $k$th concatenation convolution
power of $J$. The concatenation logarithm of the identity 
is a Lie idempotent known as the Eulerian or Solomon idempotent. 
Another Lie idempotent is the Dynkin idempotent which
has several characterizations, here it suffices to define it as follows. 
Let $[1\cdots p]_{\cL}$ denote left to right Lie bracketing
of the word $1\cdots p$ so that 
$[1\cdots p]_{\cL}\coloneqq[1,[2,[\ldots[p-1,p]\ldots]]]_{\cL}$. 
The element $\frac{1}{p}[1\cdots p]_{\cL}$ of $\Kb[\Sb_p]$ is known as the
Dynkin idempotent, where $\Kb[\Sb_p]$ is the group algebra over $\Kb$
for the symmetric group $\Sb_p$ of order $p$. 
We denote the Dynkin idempotent as
\begin{equation*}
\theta\coloneqq\frac{1}{p}[1\cdots p]_{\cL}.
\end{equation*}
That the Eulerian and Dynkin operators are Lie idempotents is proved
in Reutenauer~\cite[p.~195]{R} for example. 
The Dynkin--Friedrichs--Specht--Wever Theorem is now key. 
It states any polynomial $P$ in the concatenation Hopf algebra $\Kb\la\Ab\ra$
lies in the corresponding free Lie algebra if and only if $\theta P=P$.
The image of the Eulerian idempotent is contained in the free Lie algebra associated 
with $\Kb\la\Ab\ra$; see Reutenauer~\cite[p.~59]{R}.
Thus the element $\log(w)$ for any word $w\in\Kb\la\Ab\ra$ is a Lie element. 
The Dynkin--Friedrichs--Specht--Wever Theorem thus implies
\begin{equation*}
\theta\,\bigl(\log(w)\bigr)\equiv\log(w). 
\end{equation*}
Now by direct calculation,
\begin{align*}
\log\biggl(\sum w\otimes w\biggr)
&=\sum \log^{\scriptsize\sh}(w)\otimes w\\
&=\sum w\otimes\log(w)\\
&=\sum w\otimes\theta\bigl(\log(w)\bigr)\\
&=\sum \log^{\scriptsize\sh}(w)\otimes\theta(w)\\
&=\sum \frac{1}{|w|}\log^{\scriptsize\sh}(w)\otimes[w]_{\cL},
\end{align*}
where the sums are over all words $w\in\Ab^\ast$. Here we used that
if $X$ and $Y$ are adjoint endomorphisms, and $Z$ is an endomorphism
on the concatenation Hopf algebra, then we have 
\begin{equation*}
\sum X(w)\otimes Z(w)=\sum w\otimes Z\bigl(Y(w)\bigr).
\end{equation*}
We applied this identity with $X=\log^{\scriptsize\sh}(\id)$, 
$Y=\log(\id)$ and $Z=\theta$.
\qed
\end{proof}
The Hoffman exponential map $\exp_{\rH}$ naturally relates
It\^o and Fisk--Stratonovich multiple integrals as follows; see Kloeden and 
Platen~\cite[Remark 5.2.8]{KP} for the Wiener process case.
\begin{proposition}[It\^o to Fisk--Stratonovich: Hoffman exponential]\label{prop:Hoffexp}
For continuous semimartingales, for any word $w$ in $\Ab^\ast$, we have 
$J_w=I_{\exp_{\rH}(w)}$ where explicitly we have 
\begin{equation*}
I_{\exp_{\rH}(w)}
=\sum_{\lambda\in\CC(|w|)}\frac{1}{2^{\Sigma(\lambda)-|\lambda|}}I_{\lambda\circ w}
=I_w+\sum_{u\in[[w]]}\frac{1}{2^{|w|-|u|}}I_{u} .
\end{equation*}
The nilpotency of the generator $[\,\cdot\,,\,\cdot\,]$ implies 
the compositions $\lambda\in\CC(|w|)$ with a nonzero contribution 
only contain letters $1$ and $2$
and so $\Gamma(\lambda)=2^{\Sigma(\lambda)-|\lambda|}$. Thus 
the set $[[w]]$ consists of the words we can construct from $w$ by successively
replacing any neighbouring pairs $ii$ in $w$ by $[i,i]$. 
\end{proposition}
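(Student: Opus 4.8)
The plan is to obtain $J_w = I_{\exp_\rH(w)}$ directly from the algebraic framework already in place, and then to identify the explicit form of $\exp_\rH(w)$ in the nilpotent setting of continuous semimartingales. First I would recall that the word-to-integral maps are algebra homomorphisms: $\mu\colon\Kb\la\Ab\ra_\quas\to\Ib$ is a quasi-shuffle homomorphism and $\nu\colon\Kb\la\Ab\ra_\sh\to\Jb$ is a shuffle homomorphism. The two representations of the flowmap, $\sum_w I_w\otimes D_w$ on the It\^o side and $\sum_w J_w\otimes V_w$ on the Fisk--Stratonovich side, must coincide as the same abstract object $\sum_w w\otimes w$ transported along $(\mu\otimes\bar\mu)$ and $(\nu\otimes\bar\nu)$ respectively. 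Hoffman's exponential $\exp_\rH\colon\Kb\la\Ab\ra_\sh\to\Kb\la\Ab\ra_\quas$ is precisely the isomorphism intertwining these two Hopf structures, so $\mu\circ\exp_\rH = \nu$ as maps of the shuffle algebra into $\Ib=\Jb$ (the two integral algebras are the same underlying algebra, only the product presentation differs). Applying this to a single word $w$ gives $J_w=\nu(w)=\mu(\exp_\rH(w))=I_{\exp_\rH(w)}$, which is the first assertion. To make this airtight I would check the compatibility of $\exp_\rH$ with the It\^o-to-Stratonovich conversion at the level of one integral: from Lemma~\ref{lemma:conv} and the definition $V_{[i,i]}=-\tfrac12(V_i\cdot\pa V_i)\cdot\pa$ together with $D_{[i,i]}=\tfrac12 V_i\otimes V_i\colon\pa^2$, the single conversion step replaces a letter $i$ followed by (the generator acting on) $i$ exactly by the bracketed letter $[i,i]$ with the factor $\tfrac12$, which is the $n=2$ case of the Hoffman exponential formula. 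The general word case then follows by iterating the conversion, which is exactly what Hoffman's theorem encodes.

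Second I would unwind the explicit formula. By definition $\exp_\rH(w)=\sum_{\lambda\in\CC(|w|)}\tfrac{1}{\Gamma(\lambda)}\lambda\circ w$, so $I_{\exp_\rH(w)}=\sum_{\lambda\in\CC(|w|)}\tfrac{1}{\Gamma(\lambda)}I_{\lambda\circ w}$. The key simplification is that the generator $[\,\cdot\,,\,\cdot\,]$ is nilpotent of degree $3$ for orthogonal continuous semimartingales: all nested brackets of length $\geqslant 3$ vanish (all $[X^i,X^i,X^i]$ and higher-order variations are zero), and brackets $[i,j]$ with $i\neq j$ vanish by the orthogonality assumption $[X^i,X^j]=0$. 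Hence $\lambda\circ w$ is a nonzero word only when every part $\lambda_r$ is either $1$ or $2$, and the $\lambda_r=2$ parts must bracket two equal neighbouring letters. For such $\lambda$ one has $\Gamma(\lambda)=\prod_r \lambda_r! = 2^{\#\{r:\lambda_r=2\}} = 2^{\Sigma(\lambda)-|\lambda|}$, giving the middle expression. Finally, reindexing by the resulting word $u=\lambda\circ w$ rather than by $\lambda$: a composition contributing $u$ is determined by a choice of which neighbouring equal pairs $ii$ in $w$ to merge into $[i,i]$, and for such $u$ one has $|w|-|u| = \Sigma(\lambda)-|\lambda|$, so the weight is $2^{-(|w|-|u|)}$; the term $\lambda=(1,\dots,1)$ gives $u=w$ with weight $1$. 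This yields $I_{\exp_\rH(w)}=I_w+\sum_{u\in[[w]]}2^{-(|w|-|u|)}I_u$ with $[[w]]$ as described.

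The main obstacle I anticipate is not any single computation but rather justifying cleanly that the abstract Hopf isomorphism $\exp_\rH$ really does implement the analytic It\^o-to-Stratonovich conversion on multiple integrals for \emph{general} continuous semimartingales, not merely for Wiener processes where it is classical (cf.~Kloeden and Platen~\cite{KP}). The safe route is to prove $J_w=I_{\exp_\rH(w)}$ by induction on $|w|$, peeling off the outermost integration, applying the single-integral conversion of Lemma~\ref{lemma:conv}, using nilpotency to discard third- and higher-order brackets, and recognizing the resulting recursion as exactly the defining recursion of $\exp_\rH$ under the composition action $\lambda\circ(\,\cdot\,)$; the orthogonality hypothesis $[X^i,X^j]=0$ for $i\neq j$ is what guarantees no cross-brackets appear, matching the restriction to letters $1$ and $2$ in the compositions. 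Once $J_w=I_{\exp_\rH(w)}$ is secured, the closed-form rewriting is the routine bookkeeping sketched above.
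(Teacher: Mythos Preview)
Your ``safe route'' in the final paragraph is exactly the paper's proof: one writes, from the definition of the Fisk--Stratonovich integral,
\begin{equation*}
J_{a_1\cdots a_n}=\int J_{a_1\cdots a_{n-1}}\,\rd I_{a_n}+\tfrac12\int J_{a_1\cdots a_{n-2}}\,\rd[I_{a_{n-1}},I_{a_n}],
\end{equation*}
and recursive application of this identity reproduces the Hoffman exponential expansion. Your unwinding of the explicit formula via nilpotency and the bookkeeping $\Gamma(\lambda)=2^{\Sigma(\lambda)-|\lambda|}$ is also correct and matches the paper.

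The issue is with your first argument. The step ``Hoffman's exponential is precisely the isomorphism intertwining these two Hopf structures, \emph{so} $\mu\circ\exp_\rH=\nu$'' is a non sequitur: that $\exp_\rH$ is a Hopf isomorphism between the abstract shuffle and quasi-shuffle algebras says nothing about whether it intertwines the two \emph{specific} analytic homomorphisms $\mu$ and $\nu$. Many automorphisms of the shuffle algebra could be composed with $\exp_\rH$ and still yield a Hopf isomorphism; picking out the right one requires precisely the analytic input you defer to the safe route. In the paper's logical order this is in fact circular: the relation $\nu\equiv\mu\circ\exp_\rH$ is stated as a theorem \emph{after} this proposition and its proof reads ``follows directly from $J_w=I_{\exp_\rH(w)}$''. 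So you cannot invoke it here. A minor point: the recursion uses the bare definition of the Fisk--Stratonovich integral applied to the integrand $H=J_{a_1\cdots a_{n-1}}$, not Lemma~\ref{lemma:conv}, which concerns integrands of the form $(V_i\cdot\pa)f(Y)$; the relevant computation is $[J_{a_1\cdots a_{n-1}},X^{a_n}]=\int J_{a_1\cdots a_{n-2}}\,\rd[X^{a_{n-1}},X^{a_n}]$, obtained since finite-variation corrections do not contribute to the bracket.
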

\begin{proof}
Using the definition of the Fisk--Stratonovich integral,
for any word $w=a_1\cdots a_n$ we have
\begin{equation*}
J_{a_1\cdots a_n}=\int J_{a_1\cdots a_{n-1}}\,\rd I_{a_n}+\frac12\int J_{a_1\cdots a_{n-2}}\,\rd[I_{a_{n-1}},I_{a_n}].
\end{equation*}
Recursively applying this formula for $J_{a_1\cdots a_{n-1}}$ 
and so forth, generates the result.
\qed
\end{proof}
With this in hand, we deduce the following main result of this section.
\begin{corollary}[It\^o Lie series]\label{cor:ItoLieseries}
Let $c_\sigma$ denote the coefficient of $\sigma^{-1}(w)$ in the 
expression for $\log^{\scriptsize\sh}(w)$ above. We can express
the Chen--Strichartz Lie series in terms of multiple It\^o
integrals as follows 
\begin{equation*}
\log\biggl(\sum_w J_wV_w\biggr)
=\sum_w\sum_{\sigma\in\Sb_{|w|}}\frac{c_\sigma}{|\sigma|}
I_{\exp_{\rH}(\sigma^{-1}(w))}V_{[w]_{\cL}},
\end{equation*}
or equivalently, by resummation of the series,
\begin{equation*}
\log\biggl(\sum_w J_wV_w\biggr)
=\sum_wI_{w}\Biggl(\sum_{\sigma\in\Sb_{|w|}}
\frac{c_\sigma}{|\sigma|}V_{[\sigma(w)]_{\cL}}
+\sum_{u\in]]w[[}\frac{1}{2^{|u|-|w|}}\sum_{\sigma\in\Sb_{|u|}}\frac{c_\sigma}{|\sigma|}
V_{[\sigma(u)]_{\cL}}\Biggr).
\end{equation*}
Here for any word $w\in\Ab^\ast$ the set $]]w[[$ consists of $w$
and all words we construct from $w$ by successively replacing
any letter $[i,i]$ with $ii$, for $i=1,\ldots,d$.
\end{corollary}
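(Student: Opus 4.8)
The plan is to start from the Chen--Strichartz Lie series in Fisk--Stratonovich form already established, and simply transport every multiple Fisk--Stratonovich integral $J_v$ appearing there into multiple It\^o integrals via Proposition~\ref{prop:Hoffexp}. Recall the theorem gives
\begin{equation*}
\log\biggl(\sum_w J_wV_w\biggr)=\sum_w\frac{1}{|w|}J_{\log^{\scriptsize\sh}(w)}V_{[w]_{\cL}},
\end{equation*}
and by definition $\log^{\scriptsize\sh}(w)=\sum_{\sigma\in\Sb_{|w|}}c_\sigma\,\sigma^{-1}(w)$ with $|\sigma|=|w|$, so $\tfrac{1}{|w|}=\tfrac{1}{|\sigma|}$ for every term. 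Since $\nu$ is a shuffle algebra homomorphism and is linear, $J_{\log^{\scriptsize\sh}(w)}=\sum_\sigma c_\sigma J_{\sigma^{-1}(w)}$. Applying $J_v=I_{\exp_{\rH}(v)}$ from Proposition~\ref{prop:Hoffexp} term by term yields exactly the first displayed formula of the Corollary. This first step is essentially bookkeeping.

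For the resummed form, I would expand $\exp_{\rH}(\sigma^{-1}(w))$ using the explicit description in Proposition~\ref{prop:Hoffexp}: $I_{\exp_{\rH}(v)}=I_v+\sum_{u\in[[v]]}2^{-(|v|-|u|)}I_u$, where $[[v]]$ is obtained from $v$ by successively collapsing neighbouring pairs $ii\mapsto[i,i]$. The key observation is that the map $\sigma\mapsto\sigma^{-1}$ is a bijection of $\Sb_{|w|}$ and $|\sigma^{-1}|=|\sigma|$, so after relabelling we may write the first formula as $\sum_w\sum_{\sigma}\frac{c_\sigma}{|\sigma|}I_{\exp_{\rH}(\sigma(w))}V_{[w]_{\cL}}$ (absorbing the inversion into the summation index; one must check $d(\sigma^{-1})$ versus $d(\sigma)$ does not matter because we are resumming over all of $\Sb_{|w|}$ anyway, only the multiset of coefficients attached to each rearranged word is used). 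Then I would reorganize the double sum by collecting the coefficient of each fixed It\^o integral $I_z$. A given $I_z$ arises in two ways: either $z$ contains only letters from $\{1,\dots,d\}$, in which case $z=\sigma(w)$ for the unique $w$ with the same letter-content and $I_z$ comes from the leading term $I_v$ of some $\exp_{\rH}$; or $z$ contains some bracket letters $[i,i]$, in which case $z\in[[\sigma(w)]]$ for words $w$ obtained by expanding each $[i,i]$ in $z$ back to $ii$ — precisely the set $]]z[[$ described in the statement — and $I_z$ is picked up with the weight $2^{-(|w|-|z|)}=2^{-(|z|-|z|)}$; more carefully, if $u\in]]z[[$ with $|u|\ge|z|$ then $z\in[[u]]$ and the weight is $2^{-(|u|-|z|)}$.

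Carrying out this collection, the coefficient of $I_z$ becomes
\begin{equation*}
\sum_{\sigma\in\Sb_{|z|}}\frac{c_\sigma}{|\sigma|}V_{[\sigma(z)]_{\cL}}
+\sum_{u\in]]z[[,\,u\neq z}\frac{1}{2^{|u|-|z|}}\sum_{\sigma\in\Sb_{|u|}}\frac{c_\sigma}{|\sigma|}V_{[\sigma(u)]_{\cL}},
\end{equation*}
which, after renaming the outer index $z$ back to $w$ and noting that the paper's convention includes $w$ itself in $]]w[[$ so the two sums can be merged as written, gives the second displayed formula. The main obstacle — and the only genuinely delicate point — is the resummation bookkeeping: one must verify that for each target word $z$ with bracket letters, the words $u$ (with all brackets expanded to pairs) that can reach $z$ via the collapsing operation of Proposition~\ref{prop:Hoffexp} are exactly those in $]]z[[$, and that the accumulated power of $2$ matches $2^{-(|u|-|z|)}$ since each collapse $ii\mapsto[i,i]$ reduces the length by one and contributes a factor $\tfrac12$. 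Once the exponent of $2$ is tracked as "number of pair-collapses $=|u|-|z|$", everything lines up. I would also remark that the nilpotency of $[\,\cdot\,,\,\cdot\,]$ of degree $3$ (noted in Section~\ref{sec:Lie}) is what guarantees these sets are finite and that no triple brackets $[i,i,i]$ intervene, so the sums are well defined.
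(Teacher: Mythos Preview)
Your derivation of the first formula is correct and matches the paper: apply $J_v=I_{\exp_{\rH}(v)}$ from Proposition~\ref{prop:Hoffexp} term by term in the Chen--Strichartz Lie series.

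For the resummed formula there is a genuine error in your relabelling step. You propose to absorb the inversion by replacing $\sigma$ with $\sigma^{-1}$ in the summation index, turning $\sum_\sigma c_\sigma I_{\exp_{\rH}(\sigma^{-1}(w))}V_{[w]_{\cL}}$ into $\sum_\sigma c_\sigma I_{\exp_{\rH}(\sigma(w))}V_{[w]_{\cL}}$. But $c_\sigma$ depends on $\mathrm{d}(\sigma)$, and $\mathrm{d}(\sigma^{-1})\neq\mathrm{d}(\sigma)$ in general; after substituting $\tau=\sigma^{-1}$ the coefficient becomes $c_{\tau^{-1}}$, not $c_\tau$, so the pairing between coefficients and words changes. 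Your parenthetical justification (``only the multiset of coefficients is used'') does not repair this, because each coefficient is tied to a specific word $\sigma(w)$ and thence to a specific $V_{[w]_{\cL}}$. The correct move is to reindex the \emph{word}: set $v=\sigma^{-1}(w)$, so $w=\sigma(v)$ and the generic term becomes $\tfrac{c_\sigma}{|\sigma|}I_{\exp_{\rH}(v)}V_{[\sigma(v)]_{\cL}}$. Now expanding $I_{\exp_{\rH}(v)}$ and collecting the coefficient of a fixed $I_z$ runs over all $v$ obtainable from $z$ by expanding bracket letters, i.e.\ over $v\in]]z[[$, and yields the stated coefficient directly with no further manipulation of $c_\sigma$.

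The paper takes a different and cleaner route for the resummation: it works in the abstract tensor algebra and transfers the left endomorphism $\exp_{\rH}\circ\log^{\scriptsize\sh}$ across the tensor to its adjoint $\log\circ\exp_{\rH}^\dag$ on the right, using that $(\log^{\scriptsize\sh})^\dag=\log$ is the concatenation convolution logarithm with action $\log(w)=\sum_\sigma c_\sigma\,\sigma(w)$, and that $\exp_{\rH}^\dag$ is the concatenation homomorphism sending $[i,i]\mapsto[i,i]+\tfrac12 ii$. This adjoint passage is precisely what explains, structurally, why the resummed formula features $\sigma(u)$ rather than $\sigma^{-1}(u)$ on the vector-field side, and it bypasses the combinatorial bookkeeping of your approach entirely.
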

Since the Fisk--Stratonovich and It\^o representations for the flowmap must
coincide, the expression above must coincide with the logarithm of the 
It\^o representation for the flowmap. 
The key fact distinguishing the It\^o from the 
Fisk--Stratonovich representation for the flowmap is that
the word-to-vector field map 
$\bar\nu\colon\Kb\la\Ab\ra_{\mathrm{conc},\delta}\to\mathbb V$ 
and word-to-partial differential operator map 
$\bar\mu\colon\Kb\la\Ab\ra_{\mathrm{conc},\Delta'}\to\mathbb D$, 
which are both concatenation homomorphisms, assign
\begin{equation*}
\bar\mu\colon
\begin{cases}
~~i&\!\!\mapsto V_i\cdot\pa,\\
[i,i]&\!\!\mapsto\tfrac12V_i\otimes V_i\colon\pa^2,
\end{cases}
\qquad\text{and}\qquad
\bar\nu\colon
\begin{cases}
~~i&\!\!\mapsto V_i\cdot\pa,\\
[i,i]&\!\!\mapsto-\tfrac12(V_i\cdot\pa V_i)\cdot\pa.
\end{cases}
\end{equation*}
The It\^o representation is constructed by composing 
the operators on the left shown above, while that for the 
Fisk--Stratonovich representation is constructed by composing
the operators on the right. The operators 
$\tfrac12V_i\otimes V_i\colon\pa^2$ and $\tfrac12(V_i\cdot\pa V_i)\cdot\pa$ 
are both associated with the quadratic variation process $[X^i,X^i]$. 
The It\^o word-to-integral map 
$\mu\colon\Kb\la\Ab\ra_{\quas,\Delta}\to\mathbb I$
and Fisk--Stratonovich word-to-integral map 
$\nu\colon\Kb\la\Ab\ra_{\sh,\Delta}\to\mathbb J$
both assign $i\mapsto X^i$ and $[i,i]\mapsto[X^i,X^i]$.
The former is a quasi-shuffle homomorphism and
the latter a shuffle homomorphism and consequently
$J_w=I_{\exp_{\rH}(w)}$. This relation, together with
the calculus product rule given by
$\bigl(V_i\cdot\pa\bigr)\bigl(V_i\cdot\pa\bigr)=
(V_i\otimes V_i)\colon\pa^2+(V_i\cdot\pa V_i)\cdot\pa$
underlie the following result. Recall the definition 
of the Hoffman exponential and logarithm map adjoints 
in Section~\ref{sec:algebra}.
\begin{theorem}[It\^o and Fisk--Stratonovich map relations]
The two word-to-integral maps $\mu$ and $\nu$ and the 
word-to-vector field and word-to-partial differential operator maps
$\bar\mu$ and $\bar\nu$ are related as follows
\begin{equation*}
\nu\equiv\mu\circ\exp_{\rH}
\qquad\text{and}\qquad
\bar\nu\equiv\bar\mu\circ\log_{\rH}^\dag.
\end{equation*}
\end{theorem}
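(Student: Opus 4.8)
The plan is to verify the two claimed identities separately, in each case by checking that both sides agree as maps out of the relevant Hopf algebra, using that the source algebras are \emph{generated} (as algebras) by the letters of $\Ab$, so it suffices to check agreement on letters together with compatibility with the algebra structure. For the first identity $\nu\equiv\mu\circ\exp_\rH$, recall from Proposition~\ref{prop:Hoffexp} that for every word $w\in\Ab^\ast$ we have $J_w=I_{\exp_\rH(w)}$; since by definition $\nu(w)=J_w$ and $(\mu\circ\exp_\rH)(w)=\mu\bigl(\exp_\rH(w)\bigr)=I_{\exp_\rH(w)}$ (the last step using that $\mu$ is the linear word-to-integral map, extended linearly to the expansion $\exp_\rH(w)=\sum_\lambda \Gamma(\lambda)^{-1}\lambda\circ w$), the two maps agree on every word and hence coincide. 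I would remark that this is consistent on the algebra level: $\nu$ is a shuffle homomorphism $\Kb\la\Ab\ra_\sh\to\mathbb J$, $\exp_\rH$ is the Hoffman isomorphism $\Kb\la\Ab\ra_\sh\to\Kb\la\Ab\ra_\quas$, and $\mu$ is a quasi-shuffle homomorphism $\Kb\la\Ab\ra_\quas\to\mathbb I=\mathbb J$, so the composite is automatically a shuffle homomorphism, matching $\nu$.

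For the second identity $\bar\nu\equiv\bar\mu\circ\log_\rH^\dag$, I would use that both $\bar\nu$ and $\bar\mu$ are concatenation homomorphisms and that $\log_\rH^\dag$ is a homomorphism for the concatenation product, so $\bar\mu\circ\log_\rH^\dag$ is a concatenation homomorphism; therefore it suffices to check that $\bar\nu$ and $\bar\mu\circ\log_\rH^\dag$ agree on each letter $a\in\Ab$. For a letter $a=i$ with $i\in\{1,\ldots,d\}$, there is no nontrivial bracket decomposition $[a_1,\ldots,a_n]=i$ with $n\geqslant2$ (since $i$ is a ``base'' letter, not a bracket), so $\log_\rH^\dag(i)=i$ and hence $(\bar\mu\circ\log_\rH^\dag)(i)=\bar\mu(i)=V_i\cdot\pa=\bar\nu(i)$. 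For a letter $a=[i,i]$, the only bracket decompositions are $[i,i]$ itself ($n=1$) and $[i,i,i,\ldots,i]$ with $n\geqslant3$ copies of $i$, but all of these with $n\geqslant3$ vanish because the generator $[\,\cdot\,,\,\cdot\,]$ is nilpotent of degree $3$ for continuous semimartingales; the only genuinely new contribution is $n=2$, giving $\log_\rH^\dag([i,i])=[i,i]-\tfrac12\,ii$. Applying $\bar\mu$ and using the product rule $\bigl(V_i\cdot\pa\bigr)\bigl(V_i\cdot\pa\bigr)=(V_i\otimes V_i)\colon\pa^2+(V_i\cdot\pa V_i)\cdot\pa$, we get
\begin{equation*}
(\bar\mu\circ\log_\rH^\dag)([i,i])
=\tfrac12 V_i\otimes V_i\colon\pa^2-\tfrac12\bigl(V_i\cdot\pa\bigr)\bigl(V_i\cdot\pa\bigr)
=-\tfrac12(V_i\cdot\pa V_i)\cdot\pa=\bar\nu([i,i]),
\end{equation*}
as required.

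The step I expect to be the main obstacle is making rigorous the claim that ``it suffices to check agreement on letters'' in the second identity: one must be careful that $\log_\rH^\dag$, $\bar\mu$ and $\bar\nu$ are all homomorphisms for \emph{the same} concatenation product, and that the source Hopf algebras ($\Kb\la\Ab\ra_{\mathrm{conc},\delta}$ versus $\Kb\la\Ab\ra_{\mathrm{conc},\Delta'}$) are indeed free associative algebras on $\Ab$ as algebras (they are, since concatenation is concatenation regardless of which coproduct one puts on the space). Once that is in place the identity on letters propagates to all of $\Kb\la\Ab\ra$. A secondary subtlety worth flagging explicitly is the role of nilpotency: without it $\log_\rH^\dag([i,i])$ would contain infinitely many terms $\tfrac{(-1)^{n-1}}{n}\,i^n$ and the clean formula for $\bar\nu([i,i])$ would fail; I would note that this is exactly why the continuous-semimartingale hypothesis (equivalently, vanishing of all triple and higher quadratic variations) is essential here. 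Finally, for the first identity one could alternatively prove it at the abstract level—$\nu$ and $\mu$ being determined by the homomorphism property plus values on letters, with $\exp_\rH$ and $\log_\rH$ mutually inverse isomorphisms intertwining $\sh$ and $\quas$—but the direct route via Proposition~\ref{prop:Hoffexp} is shorter and self-contained.
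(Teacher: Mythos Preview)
Your proof is correct and follows essentially the same route as the paper: the first identity is deduced directly from Proposition~\ref{prop:Hoffexp}, and the second is verified on letters using that $\bar\mu$, $\bar\nu$ and $\log_\rH^\dag$ are concatenation homomorphisms, the product rule, and nilpotency of the bracket. One small wording slip: in your enumeration of bracket decompositions of $[i,i]$ you list $n=1$ and $n\geqslant3$ but omit $n=2$ from the list before invoking it as ``the only genuinely new contribution''; the formula $\log_\rH^\dag([i,i])=[i,i]-\tfrac12\,ii$ you arrive at is nonetheless correct.
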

\begin{proof}
The first relation follows directly from 
$J_w=I_{\exp_{\rH}(w)}~\Leftrightarrow~\nu\circ w=\mu\circ\exp_{\rH}\circ w$. 
The second relation follows from 
the calculus product rule above which can be expressed
in the form
\begin{equation*}
\tfrac12\bar\mu\circ ii=\bar\mu\circ[i,i]-\bar\nu\circ[i,i]
\Leftrightarrow
\bar\nu\circ[i,i]=\bar\mu\circ\bigl([i,i]-\tfrac12ii\bigr)
\Leftrightarrow
\bar\nu\circ[i,i]=\bar\mu\circ\log_{\rH}^\dag\circ[i,i].
\end{equation*}
Note $ii$ denotes the concatenation of $i$ with $i$ and $\bar\mu(ii)=\bar\nu(ii)$. 
The final expression in this sequence follows using the nilpotency of the bracket
$[\,\cdot\,,\,\cdot\,]$. 
Using that $\log_{\rH}^\dag\circ i=i$ for the letters $i=1,\ldots,d$ and
$\log_{\rH}^\dag$ is a concatenation homomorphism, establishes the second result.
\qed
\end{proof}
Some immediate consequences of this result are as follows. First, algebraically, we observe
\begin{align*}
\sum I_wD_w
&=\sum \mu\circ w\otimes\bar\mu\circ w\\
&=\sum \mu\circ w\otimes\bar\nu\circ\exp_{\rH}^\dag\circ w\\
&=(\mu\otimes\bar\nu)\circ\biggl(\sum w\otimes\exp_{\rH}^\dag\circ w\biggr)\\
&=(\mu\otimes\bar\nu)\circ\biggl(\sum \exp_{\rH}\circ w\otimes w\biggr)\\
&=\sum \mu\circ\exp_{\rH}\circ w\otimes\bar\nu\circ w\\
&=\sum \nu\circ w\otimes\bar\nu\circ w\\
&=\sum J_wV_w.
\end{align*}
In other words the It\^o and Fisk--Stratonovich flowmaps coincide. Note
the transfer of $\exp_{\rH}^\dag$ from the right of the tensor product to 
$\exp_{\rH}$ on the left, relies solely on the vector space properties
of $\Kb\la\Ab\ra$. Second, using this result we observe 
\begin{align*}
\log\biggl(\sum I_wD_w\biggr)
&=\log\circ(\mu\otimes\bar\mu)\circ\biggl(\sum w\otimes w\biggr)\\
&=\log\circ(\nu\otimes\bar\nu)\circ\biggl(\sum w\otimes w\biggr)\\
&=(\nu\otimes\bar\nu)\circ\log\circ\biggl(\sum w\otimes w\biggr)\\
&=(\nu\otimes\bar\nu)\circ\biggl(\sum\log^{\scriptsize\sh}\circ w\otimes w\biggr)\\
&=(\nu\otimes\bar\nu)\circ\biggl(\sum\frac{1}{|w|}\log^{\scriptsize\sh}\circ w\otimes [w]_{\cL}\biggr)\\
&=(\mu\otimes\bar\nu)\circ\biggl(
 \sum\frac{1}{|w|}\exp_{\rH}\circ\log^{\scriptsize\sh}\circ w\otimes [w]_{\cL}\biggr)\\
&=\sum \frac{1}{|w|}I_{\exp_{\rH}(\log^{\scriptsize\sh}(w))}V_{[w]_{\cL}},
\end{align*}
where the sums are over all words $w\in\Ab^\ast$. We can also perform a 
resummation of the series as indicated in Corollary~\ref{cor:ItoLieseries}
as follows,
\begin{align*}
\log\biggl(\sum J_wV_w\biggr)
&=(\nu\otimes\bar\nu)\circ\biggl(\sum\log^{\scriptsize\sh}\circ w\otimes \theta\circ w\biggr)\\
&=(\mu\otimes\bar\nu)\circ\biggl(
 \sum\exp_{\rH}\circ\log^{\scriptsize\sh}\circ w\otimes\theta\circ w\biggr)\\
&=(\mu\otimes\bar\nu)\circ\biggl(
 \sum w\otimes\theta\circ\log\circ\exp_{\rH}^\dag\circ w\biggr),
\end{align*}
where 
\begin{align*}
\log\circ\exp_{\rH}^\dag\circ w&=\log\circ\Biggl(w+\sum_{u\in]]w[[}\frac{1}{2^{|u|-|w|}}u\Biggr)\\
&=\sum_{\sigma\in\Sb_{|w|}}c_\sigma\,\sigma(w)
+\sum_{u\in]]w[[}\frac{1}{2^{|u|-|w|}}\sum_{\sigma\in\Sb_{|u|}}c_\sigma\,\sigma(u).
\end{align*}
These last two results are
thus a restatement of the It\^o Lie series results in Corollary~\ref{cor:ItoLieseries}. 
\begin{remark} 
From the algebraic combinatorial computations above,
we observe: (1) In the first computation above the 
transformation from the It\^o to Fisk--Stratonovich
flowmaps was instigated by the transformation of coordinates $\mu=\nu\circ\exp_{\rH}^\dag$.
In other words this transformation, which is a direct result of the product rule,
encodes all the information required for It\^o to Fisk--Stratonovich conversion;
(2) Quadratic variations are a natural component in the Fisk--Stratonovich
formulation; (3) The encoding which retains the letters $[1,1],\ldots,[d,d]$
as well as $1,\ldots,d$ in the alphabet appears to be natural, 
especially in the context of using the quasi-shuffle machinery 
provided by Hoffman~\cite{H}. Indeed this is also the case for
stochastic differential equations driven by Wiener processes for
which it is usual to replace the quadratic variation terms by the 
corresponding drift term; 
(4) The flowmap satisfies the linear equation 
$\varphi=\id+\int\varphi\,\mathrm{d}S$ with $S\coloneqq\sum_i D_iX^i$, 
see Ebrahimi-Fard, Malham, Patras and Wiese~\cite{E-FMPW}. 
When the coefficients $D_i$ are constant, the solution is the 
well-known Dol{\'e}ans-Dade exponential; a representation of it in terms 
of iterated integrals was derived in Jamishidian~\cite{J2011}.
\end{remark}

\section{Quasi-Shuffle Chen--Strichartz formula}\label{sec:Chen--Strichartz}
In this section we do not make any nilpotency assumptions 
on $n$-fold nested brackets as in Section~\ref{sec:Lie}. 
We derive an explicit formula for the coefficients of the
quasi-shuffle convolution logarithm of the identity endomophism on $\Kb\la\Ab\ra_\quas$,
i.e.\/ we explicitly enumerate
\begin{equation*}
\log^\quas(\id).
\end{equation*}
This represents the quasi-shuffle logarithm equivalent of the 
Chen--Strichartz shuffle logarithm formula and was derived
in Novelli, Patras and Thibon~\cite{NPT} and generalized to 
linear matrix valued systems in 
Ebrahimi--Fard, Malham, Patras and Wiese~\cite{E-FMPW}.
Using the notion of surjections instead of permutations and
quasi-descents, we can closely follow the 
development given in Reutenauer~\cite{R}.
We begin by outlining the theory of surjections and 
quadratic covariation permutations, which we hereafter
call ``quasi-permutations'', as well as their action on words. 
We denote the symmetric group of order $p$ by
$\mathbb S_p$ and corresponding group algebra over the field $\Kb$ by $\Kb[\mathbb S_p]$.
The crucial fact about any permutation $\sigma\in\Kb[\mathbb S_p]$, which underlies
the classical shuffle Chen--Strichartz formula, is that 
the inverse $\sigma^{-1}$ records the following information:
``The letter $i$ is at position $\sigma^{-1}(i)$ in $\sigma$''. 
We exploit the corresponding result for surjections herein. 

We shall denote the set of surjective maps from the set of natural numbers 
$\{1,\ldots,p\}$ to the set of natural numbers $\{1,\ldots,q\}$ with $q\leqslant p$
by $\mathbb S_{p,q}'$. We set
\begin{equation*}
\mathbb S_p'\coloneqq\bigcup_{q\leqslant p}\mathbb S_{p,q}'.
\end{equation*}
Naturally we have $\mathbb S_p\subseteq\mathbb S_p'$. 
Associated with each surjection in $\mathbb S_p'$ 
is a quasi-permutation.
\begin{definition}[Quasi-permutations]
We denote by $\CC(\mathbb S_p)$ the set of all quasi-permutations, 
these are all the permutations in $\mathbb S_p$                               
together with all unique words formed by applying all possible composition                  
actions to these permutations, taking care to unify equivalent terms. In other words,       
\begin{equation*}                                                                           
\CC(\mathbb S_p)\coloneqq                                                                   
\bigl\{\lambda\circ\sigma\colon\sigma\in\Sb_p,\lambda\in\CC(p)\bigr\},
\end{equation*}                                                                                  
where we identify all terms that are equal due to the symmetry and associative                    
properties of the nested bracket operation.                                                      
\end{definition}                                                                                 
Henceforth we record quasi-permutations simply as $\sigma\in\CC(\mathbb S_p)$.
However, it is always possible to decompose (non-uniquely) any given                                            
quadratic covariation permutation into its composition and permutation components,               
say as $\lambda\circ\rho$ or as a pair $(\lambda,\rho)$ where $\lambda$                          
is a composition and $\rho$ a permutation.                                                       
Given any quasi-permutation in $\CC(\mathbb S_p)$,
there is a unique surjection in $\mathbb S_p'$ that
records the position of letters and nested brackets of letters.
\begin{example}
Consider the set of all quasi-permutations $\CC(\Sb_3)$, these are given by
the set of $\Sb_3$ permutations $123$, $132$, $213$, $231$, $312$, $321$ and 
$[1,2]3$, $1[2,3]$, $[1,3]2$, $2[1,3]$, $[2,3]1$, $3[1,2]$, $[1,2,3]$. 
The set of all surjections in $\mathbb S_3'$  
consists of $123$, $132$, $213$, $312$, $231$, $321$ and 
$112$, $122$, $121$, $212$, $211$, $221$, $111$. 
Term by term in the order given, we see that the surjections record the corresponding
positions of the letters in the quasi-permutations. 
\end{example}
Hence, by analogy with permutations, quasi-permutations play the role
of generalized permutations, while the corresponding 
surjections play the role of the inverse permutations by recording the positions of
the letters in the corresponding quasi-permutations. 
Hence we have the corresponding statement to that above and crucial 
fact about surjections: each surjection $\zeta$ corresponding to a 
given quasi-permutation $\sigma$ records the information:
\begin{equation*}
\text{The letter $i$ is at position $\zeta(i)$ in $\sigma$.}
\end{equation*}
\begin{example}
The surjection $3221$ from $\Sb_4'$, tells us that the letter that was in position 
$1$ in the quasi-permutation was sent to position position $3$, the letters $2$ and $3$ 
were sent to position $2$, and the letter $4$ was sent to position~$1$. Hence the 
corresponding quasi-permutation is $4[2,3]1$. For another example, 
if $2312$ is a given surjection in $\Sb_4'$ mapping 
$1\mapsto 2$, $2\mapsto3$, $3\mapsto1$ and $4\mapsto2$,
then the corresponding quasi-permutation is $3[1,4]2$ which is equal to $3[4,1]2$.
\end{example}
With each surjection we can associate a quasi-descent set.
\begin{definition}[Quasi-descent sets]
Given any surjection $\zeta\in\mathbb S_p'$
we define its quasi-descent set $\mathrm{Des}(\zeta)$ 
to be the list of the indices $k\in\{1,\ldots,p-1\}$ 
for which $\zeta(k+1)\leqslant\zeta(k)$.
\end{definition}
For the particular subset $\mathbb S_p\subset\mathbb S_p'$ these inequalities
would be strict and the indices $k$ would correspond to the classical descent indices.
Just as there is an intimate relation between shuffles and descents, there is 
also one between quasi-shuffles and quasi-descents. The following first key result underlies
the whole of this section.
\begin{lemma}[Quasi-descents and quasi-shuffles]\label{lemma:qdqsc} 
The set of surjections $\zeta\in\mathbb S_p'$ satisfying
$\mathrm{Des}(\zeta)\subseteq\{q\}$ for 
$q<p$, is identical to the set of surjections satisfying 
$\zeta(1)<\cdots<\zeta(q)$
and $\zeta(q+1)<\cdots<\zeta(p)$.
\end{lemma}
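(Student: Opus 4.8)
The plan is to prove the two descriptions of the surjection set coincide by establishing mutual inclusion, which in this case is almost immediate once the definitions are unpacked. First I would take a surjection $\zeta\in\mathbb S_p'$ with $\mathrm{Des}(\zeta)\subseteq\{q\}$. By definition of the quasi-descent set, for every index $k\in\{1,\ldots,p-1\}$ with $k\neq q$ we must have $\zeta(k+1)>\zeta(k)$ (otherwise $k$ would be a quasi-descent not lying in $\{q\}$). In particular this gives $\zeta(1)<\zeta(2)<\cdots<\zeta(q)$ by taking $k=1,\ldots,q-1$, and $\zeta(q+1)<\zeta(q+2)<\cdots<\zeta(p)$ by taking $k=q+1,\ldots,p-1$. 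This is exactly the second description, so one inclusion holds. Conversely, suppose $\zeta$ satisfies $\zeta(1)<\cdots<\zeta(q)$ and $\zeta(q+1)<\cdots<\zeta(p)$. Then for any $k\neq q$ in $\{1,\ldots,p-1\}$, the pair $(k,k+1)$ lies entirely within one of the two increasing blocks, whence $\zeta(k)<\zeta(k+1)$, so $k\notin\mathrm{Des}(\zeta)$. Therefore $\mathrm{Des}(\zeta)\subseteq\{q\}$, giving the reverse inclusion.

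One point worth checking carefully is that the argument is symmetric in the two descriptions and uses only the defining inequality $\zeta(k+1)\leqslant\zeta(k)$ for quasi-descents versus $\zeta(k)<\zeta(k+1)$ for non-quasi-descents; these are exact negations for maps into a totally ordered set, so no case is lost. I would also note explicitly that the statement places no constraint relating $\zeta(q)$ and $\zeta(q+1)$: the index $q$ may or may not be an actual quasi-descent, which is precisely why $\mathrm{Des}(\zeta)\subseteq\{q\}$ rather than $\mathrm{Des}(\zeta)=\{q\}$, and correspondingly the two monotone blocks in the second description are unconstrained against each other.

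There is essentially no obstacle here — the lemma is a direct translation of the quasi-descent condition into the language of monotone blocks, mirroring the classical descent-set characterization of $(q,p-q)$-shuffles. The only mild subtlety, which I would flag in passing, is that surjectivity of $\zeta$ plays no role in the equivalence itself; it is needed only so that $\zeta$ genuinely corresponds to a quasi-permutation as set up earlier in the section, and it is the feature that later distinguishes the quasi-shuffle count from the ordinary shuffle count. I expect the write-up to be two or three lines.
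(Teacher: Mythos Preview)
Your proof is correct and follows essentially the same approach as the paper: both reduce $\mathrm{Des}(\zeta)\subseteq\{q\}$ to the contrapositive statement that $k\neq q$ implies $\zeta(k)<\zeta(k+1)$, which is exactly the two-block monotonicity condition. The paper compresses this into a single line of logical equivalences rather than writing out mutual inclusion, but the content is identical; your additional remarks about the unconstrained relation between $\zeta(q)$ and $\zeta(q+1)$ and the irrelevance of surjectivity to the equivalence itself are accurate and helpful.
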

\begin{proof}
We observe that for any surjection $\zeta\in\mathbb S_p'$
for which $\mathrm{Des}(\zeta)\subseteq\{q\}$ for some natural
number $q<p$, then discounting the case when the quasi-descent set is
empty, by definition we must have 
$\zeta(k)\geqslant\zeta(k+1)\implies k=q~~\Leftrightarrow~~k\neq q\implies\zeta(k)<\zeta(k+1)$.
The latter condition is equivalent to that in the statement 
of the lemma. 
\qed
\end{proof}
The second key result we establish in this section is a natural consequence.
\begin{corollary}[Quasi-descents and quasi-shuffles] \label{cor:qshufdesc}
Let $q<p$ be natural numbers. If we factorize the word $1\cdots p=u_1u_2$
with $|u_1|=q$ and $|u_2|=p-q$, then the quasi-shuffle product of $u_1\quas u_2$ is given by
\begin{equation*}
u_1\,\quas\, u_2
\equiv\sum_{\substack{\zeta(1)<\cdots<\zeta(q)\\ \zeta(q+1)<\cdots<\zeta(p)}}
\sigma(\zeta)
\equiv\sum_{\mathrm{Des}(\zeta)\subseteq\{q\}}\sigma(\zeta),
\end{equation*}
where $\sigma(\zeta)$ denotes the unique quasi-permutation associated with
a given surjection $\zeta$.
The first sum is over all $\zeta\in\mathbb S_p'$ satisfying the inequalities
shown, and the second sum is over all $\zeta\in\mathbb S_p'$
such that $\mathrm{Des}(\zeta)\subseteq\{q\}$.
\end{corollary}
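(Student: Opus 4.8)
The plan is to reduce the corollary to Lemma~\ref{lemma:qdqsc} together with a direct expansion of the recursive definition of the quasi-shuffle product. The second equivalence in the displayed equation is immediate: by Lemma~\ref{lemma:qdqsc} the two index sets---surjections $\zeta\in\Sb_p'$ with $\mathrm{Des}(\zeta)\subseteq\{q\}$, and surjections with $\zeta(1)<\cdots<\zeta(q)$ and $\zeta(q+1)<\cdots<\zeta(p)$---coincide term by term, so the two sums are literally the same sum. Thus the real content is the first equivalence, namely that $u_1\,\quas\,u_2$, with $u_1=1\cdots q$ and $u_2=(q+1)\cdots p$, equals $\sum\sigma(\zeta)$ over the surjections satisfying the two increasing conditions.

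First I would unfold the recursion $ua\,\quas\,vb=(u\,\quas\,vb)\,a+(ua\,\quas\,v)\,b+(u\,\quas\,v)\,[a,b]$ applied to the two increasing words $1\cdots q$ and $(q+1)\cdots p$. Every term produced by the recursion is obtained by reading off, from left to right, an interleaving of the two words in which one of three things happens at each step: we take the next available letter from $u_1$, or the next available letter from $u_2$, or we merge the next available letters of $u_1$ and $u_2$ into a single bracketed letter. Crucially, because the letters inside $u_1$ appear in their natural order and likewise for $u_2$, the letter $i\le q$ always lands in some position $\zeta(i)$ with $\zeta(1)<\cdots<\zeta(q)$ (distinct positions, increasing), and similarly the letters of $u_2$ occupy an increasing sequence of positions $\zeta(q+1)<\cdots<\zeta(p)$; a merge event is exactly the situation where a letter of $u_1$ and a letter of $u_2$ share a position. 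This is precisely the data recorded by a surjection $\zeta\in\Sb_p'$ with $\mathrm{Des}(\zeta)\subseteq\{q\}$ in the convention established just before this corollary (``the letter $i$ is at position $\zeta(i)$''), and the resulting word in $\Kb\la\Ab\ra$ is by definition the quasi-permutation $\sigma(\zeta)$.

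The remaining point is that this correspondence between computation paths in the recursion and such surjections is a \emph{bijection} onto the claimed index set, with each target word appearing exactly once---i.e.\ no cancellation and no repetition. For surjectivity: given any $\zeta$ with the two increasing conditions, the preimages $\zeta^{-1}(j)$ for $j=1,\ldots,q'$ (where $q'=\max\zeta$) are either a singleton in $\{1,\ldots,q\}$, a singleton in $\{q+1,\ldots,p\}$, or a pair consisting of one element from each block (the increasing conditions forbid two elements of the same block mapping to the same position), and reading these fibers left to right reconstructs a unique sequence of recursion choices. For injectivity/no cancellation: all three terms in the recursion carry coefficient $+1$ and the three options (letter from $u_1$, letter from $u_2$, merge) are distinguished by which word the next symbol of $\sigma(\zeta)$ originates from, so distinct paths yield genuinely distinct surjections. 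The one technical care needed---flagged by the phrase ``taking care to unify equivalent terms'' in the definition of $\CC(\Sb_p)$---is that a merged letter $[a,b]$ equals $[b,a]$, so the map $\zeta\mapsto\sigma(\zeta)$ is well-defined on the level of words without ambiguity; the increasing conventions on $\zeta$ fix a canonical representative so the count of surjections is exactly the count of distinct quasi-permutations appearing.

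The main obstacle I anticipate is purely bookkeeping rather than conceptual: making the ``read off the fibers of $\zeta$'' description precise enough that the bijection with recursion paths is manifestly one-to-one and coefficient-preserving, while correctly handling the identification $[a,b]=[b,a]$ so that the finiteness and uniqueness built into $\CC(\Sb_p)$ matches the output of the quasi-shuffle expansion. Once that is set up cleanly, the corollary follows by combining this bijection with Lemma~\ref{lemma:qdqsc} for the second equivalence. \qed
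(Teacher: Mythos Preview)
Your proposal is correct and follows essentially the same approach as the paper: expand the quasi-shuffle recursion on $1\cdots q$ and $(q{+}1)\cdots p$, identify each resulting term with the quasi-permutation $\sigma(\zeta)$ of a surjection $\zeta$ satisfying the two increasing conditions, and then invoke Lemma~\ref{lemma:qdqsc} for the second equivalence. The paper's proof is considerably terser---it simply asserts that recursively applying the defining formula yields exactly the sum over surjections with $\zeta(1)<\cdots<\zeta(q)$ and $\zeta(q+1)<\cdots<\zeta(p)$---whereas you spell out the bijection between recursion paths and such surjections and address the $[a,b]=[b,a]$ identification explicitly; but the underlying argument is the same.
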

\begin{proof}
Recall the definition of the quasi-shuffle product and its 
generation through the formula 
\begin{align*}
(1\cdots q)\,\quas\,(q+1\cdots p)=&\;\bigl((1\cdots q-1)\,\quas\,(q+1\cdots p)\bigr)\,q\\
&\;+\bigl((1\cdots q)\,\quas\,(q+1\cdots p-1)\bigr)\,p\\
&\;+\bigl((1\cdots q-1)\,\quas\,(q+1\cdots p-1)\bigr)\,[q,p].
\end{align*}
We observe that if we recursively apply this formula to obtain on the righthand side
the complete sum over all quasi-permutations, then the quasi-shuffle product
of $u_1$ and $u_2$ is equivalent to the prescription that it is the sum over all
quasi-permutations whose corresponding surjections satisfy the set of
inequalities $\zeta(1)<\cdots<\zeta(q)$ 
and $\zeta(q+1)<\cdots<\zeta(p)$. This establishes the first result.
With this in hand, the result of the quasi-descent and quasi-shuffle conditions 
Lemma~\ref{lemma:qdqsc} above, implies the equivalence to the second result.
\qed
\end{proof}
The following generalization is then immediate and represents
the quasi-shuffle analog of Lemma~3.13 in Reutenauer~\cite[p.~65]{R}.
\begin{corollary}[Multiple quasi-shuffles and quasi-descents] \label{cor:qshufdesc2}
Let $p_1$, \ldots, $p_k$ be positive integers of sum $p$ 
and $S=\{p_1, p_1+p_2,\ldots,p_1+\cdots+p_{k-1}\}$
be a subset of $\{1,\ldots,p-1\}$. If we factorize the word
$1\cdots p=u_1\cdots u_k$ with $|u_i|=p_i$ for $i=1,\ldots,k$, then 
we have
\begin{equation*}
u_1\,\quas\cdots\quas\,u_k=\sum_{\mathrm{Des}(\zeta)\subseteq S}\sigma(\zeta).
\end{equation*}
\end{corollary}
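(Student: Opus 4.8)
The plan is to prove Corollary~\ref{cor:qshufdesc2} by induction on the number of factors $k$, using Corollary~\ref{cor:qshufdesc} as the base case $k=2$ (the case $k=1$ being the trivial identity $u_1=1\cdots p$, whose unique associated surjection is the identity with empty quasi-descent set). The inductive step will reduce a $k$-fold quasi-shuffle to a $2$-fold quasi-shuffle of a word with a $(k-1)$-fold quasi-shuffle, then translate the descent conditions accordingly.

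First I would set up the induction. Assume the result holds for $k-1$ factors. Write $1\cdots p=u_1\cdots u_k$ with $|u_i|=p_i$, and set $w'\coloneqq u_1\cdots u_{k-1}$, a word of length $p'\coloneqq p-p_k$ on the letters $\{1,\ldots,p'\}$, factored as $w'=u_1\cdots u_{k-1}$. By associativity of the quasi-shuffle product, $u_1\,\quas\cdots\quas\,u_k=(u_1\,\quas\cdots\quas\,u_{k-1})\,\quas\,u_k$. Apply the inductive hypothesis to the inner product: $u_1\,\quas\cdots\quas\,u_{k-1}=\sum_{\mathrm{Des}(\xi)\subseteq S'}\sigma(\xi)$, where $S'=\{p_1,\ldots,p_1+\cdots+p_{k-2}\}\subseteq\{1,\ldots,p'-1\}$ and $\xi$ ranges over surjections in $\mathbb S_{p'}'$. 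Then quasi-shuffle each such quasi-permutation $\sigma(\xi)$ with $u_k=(p'+1)\cdots p$.

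Next I would invoke Corollary~\ref{cor:qshufdesc} in the form needed: for a quasi-permutation $\sigma(\xi)$ on letters $\{1,\ldots,p'\}$ quasi-shuffled with the block $u_k$ on the ``fresh'' letters $\{p'+1,\ldots,p\}$, the product is the sum of all quasi-permutations on $\{1,\ldots,p\}$ whose associated surjection $\zeta\in\mathbb S_p'$ restricts to $\xi$ on the positions occupied by letters $\{1,\ldots,p'\}$ and is strictly increasing on the positions $p'+1,\ldots,p$ occupied by $u_k$. Equivalently, by the quasi-descents reformulation, $\zeta$ satisfies $\mathrm{Des}(\zeta)\cap\{p'+1,\ldots,p-1\}=\varnothing$ while the restriction of $\zeta$ to its first $p'$ arguments reproduces the quasi-descent structure of $\xi$, so that $\mathrm{Des}(\zeta)\cap\{1,\ldots,p'-1\}\subseteq S'$ and possibly $p'\in\mathrm{Des}(\zeta)$. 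Collecting the conditions, $\mathrm{Des}(\zeta)\subseteq S'\cup\{p'\}=\{p_1,\ldots,p_1+\cdots+p_{k-2},p_1+\cdots+p_{k-1}\}=S$. Summing over all $\xi$ with $\mathrm{Des}(\xi)\subseteq S'$ and then over the resulting $\zeta$ exactly enumerates, without repetition, every surjection $\zeta\in\mathbb S_p'$ with $\mathrm{Des}(\zeta)\subseteq S$, each producing its unique associated quasi-permutation; this gives $u_1\,\quas\cdots\quas\,u_k=\sum_{\mathrm{Des}(\zeta)\subseteq S}\sigma(\zeta)$, completing the induction.

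The main obstacle I expect is the bookkeeping in the inductive step: making precise the claim that quasi-shuffling $\sigma(\xi)$ with a block on fresh letters ``extends'' the surjection $\xi$ to a surjection $\zeta$ on $\{1,\ldots,p\}$ compatibly, and verifying that the map $\xi\mapsto\{\zeta\}$ gives a genuine partition of $\{\zeta\in\mathbb S_p':\mathrm{Des}(\zeta)\subseteq S\}$ (i.e.\ no double counting and nothing missed). This hinges on the observation that deleting from a word on $\{1,\ldots,p\}$ all nested-bracket components involving any letter $>p'$ recovers a well-defined quasi-permutation on $\{1,\ldots,p'\}$, whose surjection is precisely the induced restriction of $\zeta$; and that the quasi-descent condition $\mathrm{Des}(\zeta)\subseteq S$ decouples cleanly across the cut at position $p'$ because $S$ itself decomposes as $S'\sqcup\{p'\}$ with $S'\subseteq\{1,\ldots,p'-1\}$. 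Once this decoupling is stated carefully, the counting is routine. Alternatively, one could bypass the induction entirely and mimic the direct recursive generation of the $k$-fold quasi-shuffle product as in the proof of Corollary~\ref{cor:qshufdesc}, arguing that iterating the defining recursion produces exactly the quasi-permutations whose surjections are increasing on each of the $k$ intervals $\{1,\ldots,p_1\},\{p_1+1,\ldots,p_1+p_2\},\ldots$, which by the $k$-fold analog of Lemma~\ref{lemma:qdqsc} is equivalent to $\mathrm{Des}(\zeta)\subseteq S$; I would present the induction as the cleaner route since Lemma~\ref{lemma:qdqsc} is stated only for a single descent position.
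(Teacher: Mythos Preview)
Your induction is correct, and you have honestly flagged the real work: the step where you quasi-shuffle a general quasi-permutation $\sigma(\xi)$ with a fresh block $u_k$ and must establish that the resulting surjections $\zeta$ are exactly those whose pattern on $\{1,\ldots,p'\}$ is $\xi$ and which are strictly increasing on $\{p'+1,\ldots,p\}$. This is true, but it is a genuine extension of Corollary~\ref{cor:qshufdesc} (which is stated only for the quasi-shuffle of two \emph{identity} blocks), and proving it carefully---including the no-double-counting claim---is the bulk of your argument.

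The paper, by contrast, gives no proof at all: it simply declares the result ``immediate'' from Corollary~\ref{cor:qshufdesc}. What is implicitly meant is the alternative route you describe in your final paragraph: the recursive unfolding of the $k$-fold quasi-shuffle product of consecutive blocks produces precisely the quasi-permutations whose associated surjection is strictly increasing on each of the $k$ intervals, and the obvious $k$-block analogue of Lemma~\ref{lemma:qdqsc} (whose proof is identical, replacing the single cut $q$ by the set $S$) identifies this with $\mathrm{Des}(\zeta)\subseteq S$. That path avoids ever quasi-shuffling a non-identity word, so the restriction/bijection bookkeeping you worry about never arises. Your induction buys a more formally self-contained argument resting only on the two-factor case as stated, at the cost of the extra surjection-restriction lemma; the direct route is shorter but tacitly asks the reader to rerun both the proof of Corollary~\ref{cor:qshufdesc} and Lemma~\ref{lemma:qdqsc} with $k$ blocks.
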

Much like the symmetric group action on words there is an analogous
quasi-permutation action on words.
Recall we can decompose any quasi-permutation in $\Kb[\CC(\mathbb S_p)]$
as $\sigma=\lambda\circ\rho$, into its composition $\lambda\in\CC$ 
and permutation $\rho\in\mathbb S_p$ components.
\begin{definition}[Quasi-permutation action]
The action of $\Kb[\CC(\mathbb S_p)]$ on 
$\Kb\la\Ab\ra$ for any $\sigma\in\Kb[\CC(\mathbb S_p)]$
decomposed as $\sigma=\lambda\circ\rho$ and word $w=a_1\ldots a_p$, is defined by 
$\sigma w\coloneqq\lambda\circ(a_{\rho(1)}\cdots a_{\rho(p)})$.
\end{definition}

We can now construct the quasi-shuffle logarithm of the identity.
We start with the quasi-shuffle convolution powers of the augmented ideal projector $J$.
\begin{corollary}[Convolution powers and descents]\label{cor:convpowers}
For any $k\geqslant1$ and word $w$ we have
\begin{equation*}
J^{\quas k}(w)=\Biggl(\sum_{|S|=k-1}\sum_{\mathrm{Des}(\zeta)\subseteq S}\sigma(\zeta)\Biggr)\circ w.
\end{equation*}
\end{corollary}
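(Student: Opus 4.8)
The plan is to induct on $k$ and reduce everything to the multiple-quasi-shuffle identity of Corollary~\ref{cor:qshufdesc2}. First I would unpack the definition of the $k$th quasi-shuffle convolution power: by the formula $\bigl(X\,\quas\, Y\bigr)(w)=\sum_{uv=w}X(u)\,\quas\, Y(v)$ extended to $k$ factors, together with the fact that $J$ kills the empty word and fixes all non-empty words, one gets
\begin{equation*}
J^{\quas k}(w)=\sum_{\substack{u_1\cdots u_k=w\\ u_i\neq 1}} u_1\,\quas\,\cdots\,\quas\, u_k,
\end{equation*}
where the sum is over all ordered factorizations of $w$ into $k$ \emph{non-empty} subwords. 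This is exactly the decomposition used in deriving Lemma~\ref{lemma:logpowerseries}, so it can be quoted rather than re-derived.

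Next I would observe that such factorizations of $w=a_1\cdots a_p$ are in bijection with subsets $S\subseteq\{1,\ldots,p-1\}$ of cardinality $k-1$: the set $S=\{p_1,p_1+p_2,\ldots,p_1+\cdots+p_{k-1}\}$ of partial sums of the block lengths $p_i=|u_i|$ records precisely where one block ends and the next begins. Under this bijection the factorization $u_1\cdots u_k=w$ corresponds to the factorization $v_1\cdots v_k=1\cdots p$ of the ``standard word'' $1\cdots p$ with the same block lengths. Since the quasi-shuffle product is defined purely combinatorially on positions, one has $u_1\,\quas\,\cdots\,\quas\, u_k=(v_1\,\quas\,\cdots\,\quas\, v_k)\circ w$, where $\circ$ is the quasi-permutation action on words from the definition preceding this corollary. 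Applying Corollary~\ref{cor:qshufdesc2} to the right-hand side gives $v_1\,\quas\,\cdots\,\quas\, v_k=\sum_{\mathrm{Des}(\zeta)\subseteq S}\sigma(\zeta)$, and summing over all $S$ with $|S|=k-1$ yields the claimed formula.

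The main obstacle — and the one step that deserves care rather than a one-line citation — is justifying the identity $u_1\,\quas\,\cdots\,\quas\, u_k=(v_1\,\quas\,\cdots\,\quas\, v_k)\circ w$, i.e.\ that forming the quasi-shuffle of subwords of $w$ and then relabelling the standard-word quasi-shuffle by $w$ give the same element of $\Kb\la\Ab\ra$. This requires checking that the recursive quasi-shuffle rule $ua\,\quas\, vb=(u\,\quas\, vb)\,a+(ua\,\quas\, v)\,b+(u\,\quas\, v)\,[a,b]$ only ever reads positions (equivalently letter \emph{slots}) and the bracket $[a,b]$ formed from the letters occupying those slots, never the letters themselves; hence substituting $w$'s letters into the standard construction commutes with the product. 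One must also be slightly careful that the quasi-permutation action is well defined modulo the symmetry and associativity identifications of nested brackets — but this is exactly how $\CC(\mathbb S_p)$ and its action on words were defined, so the identifications match up automatically. Once this compatibility is in place, the rest is bookkeeping: the base case $k=1$ is $J(w)=w$, matching the single empty subset $S=\varnothing$ and the identity quasi-permutation.
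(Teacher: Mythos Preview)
Your proposal is correct and follows essentially the same route as the paper's four-line proof: expand $J^{\quas k}(w)$ as a sum over $k$-factorizations of $w$ quasi-shuffled together, rewrite this as factorizations of the standard word $1\cdots p$ acting on $w$, and apply Corollary~\ref{cor:qshufdesc2}. Note that your opening mention of induction on $k$ is not actually used --- the argument is direct --- and the relabelling identity you single out for care is simply asserted without comment in the paper.
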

\begin{proof}
For any word $w$ the quantity $J^{\quas k}(w)$ is the sum 
over all possible $k$-partitions of $w$, say $v_1\cdots v_k$, 
quasi-shuffled together. Hence we have
\begin{align*}
J^{\quas k}(w)&=\sum_{v_1\cdots v_k=w}v_1\,\quas\ldots\quas\,v_k\\
&=\sum_{u_1\cdots u_k=1\cdots p}(u_1\,\quas\ldots\quas\,u_k)\circ w\\
&=\sum_{|S|=k-1}\Biggl(\sum_{\mathrm{Des}(\zeta)\subseteq S}\sigma(\zeta)\Biggr)\circ w\\
&=\Biggl(\sum_{|S|=k-1}\sum_{\mathrm{Des}(\zeta)\subseteq S}\sigma(\zeta)\Biggr)\circ w,
\end{align*}
where we used Corollary~\ref{cor:qshufdesc2} in the third step. 
\qed
\end{proof}
\begin{corollary}[Quasi-Shuffle convolution logarithm on words]\label{cor:logonwords}
The action of the quasi-shuffle convolution logarithm 
on any word $w$ is as follows
\begin{equation*}
\log^\quas(w)=\Biggl(\sum_{S\subseteq\{1,\ldots,|w|-1\}}\frac{(-1)^{|S|}}{|S|+1}
\sum_{\mathrm{Des}(\zeta)\subseteq S}\sigma(\zeta)\Biggr)\circ w.
\end{equation*}
\end{corollary}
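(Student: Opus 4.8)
The plan is to combine Lemma~\ref{lemma:logpowerseries} with Corollary~\ref{cor:convpowers} and then reorganize the resulting double sum over $k$ and over $k$-partitions into a single sum over subsets $S\subseteq\{1,\ldots,|w|-1\}$. First I would recall from Lemma~\ref{lemma:logpowerseries} that
\begin{equation*}
\log^\quas(\id)=\sum_{k\geqslant1}\frac{(-1)^{k-1}}{k}J^{\quas k},
\end{equation*}
so that applying this to a word $w$ of length $p=|w|$ and using that $J^{\quas k}(w)=0$ whenever $k>p$, the sum over $k$ is finite and runs from $1$ to $p$.

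Next I would substitute the formula from Corollary~\ref{cor:convpowers}, namely $J^{\quas k}(w)=\bigl(\sum_{|S|=k-1}\sum_{\mathrm{Des}(\zeta)\subseteq S}\sigma(\zeta)\bigr)\circ w$, to obtain
\begin{equation*}
\log^\quas(w)=\Biggl(\sum_{k=1}^{p}\frac{(-1)^{k-1}}{k}\sum_{\substack{S\subseteq\{1,\ldots,p-1\}\\ |S|=k-1}}\sum_{\mathrm{Des}(\zeta)\subseteq S}\sigma(\zeta)\Biggr)\circ w.
\end{equation*}
The only remaining step is a reindexing: a subset $S\subseteq\{1,\ldots,p-1\}$ with $|S|=k-1$ is exactly a subset with $k=|S|+1$, so the outer double sum over $k$ and over $S$ with $|S|=k-1$ collapses to a single sum over all subsets $S\subseteq\{1,\ldots,p-1\}$, and the coefficient $\frac{(-1)^{k-1}}{k}$ becomes $\frac{(-1)^{|S|}}{|S|+1}$. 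This yields precisely the claimed expression
\begin{equation*}
\log^\quas(w)=\Biggl(\sum_{S\subseteq\{1,\ldots,|w|-1\}}\frac{(-1)^{|S|}}{|S|+1}\sum_{\mathrm{Des}(\zeta)\subseteq S}\sigma(\zeta)\Biggr)\circ w.
\end{equation*}

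There is no serious obstacle here; the statement is essentially a bookkeeping corollary of the two preceding results. The one point that deserves a word of care is the interchange of the (finite) sums and the verification that the truncation at $k=p$ is harmless, which follows because $J^{\quas k}(w)$ vanishes for $k>|w|$ as noted in the discussion preceding Lemma~\ref{lemma:logpowerseries} — words cannot be deconcatenated into more nonempty pieces than their number of letters. I would also remark that when the bracket $[\,\cdot\,,\,\cdot\,]$ is identically zero this reduces to the classical shuffle formula with permutations in place of quasi-permutations, recovering the expression for $\log^{\scriptsize\sh}(w)$ used in Section~\ref{sec:Lie}.
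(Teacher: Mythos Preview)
Your proof is correct and follows essentially the same route as the paper's own argument: start from the convolution-power-series definition of $\log^\quas(\id)$, substitute the formula for $J^{\quas k}(w)$ from Corollary~\ref{cor:convpowers}, and reindex $k\mapsto|S|+1$. Your added remarks on the truncation at $k=|w|$ and the shuffle specialization are sound but not needed for the argument.
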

\begin{proof}
By direct computation using Corollary~\ref{cor:convpowers} we find
\begin{align*}
\log^\quas(w)&=\sum_{k\geqslant 1}\frac{(-1)^{k-1}}{k}J^{\quas k}(w)\\
&=\sum_{k\geqslant 1}\frac{(-1)^{k-1}}{k}
\Biggl(\sum_{|S|=k-1}\sum_{\mathrm{Des}(\zeta)\subseteq S}\sigma(\zeta)\Biggr)\circ w\\
&=\Biggl(\sum_{|S|\geqslant 0}\frac{(-1)^{|S|}}{|S|+1}
\sum_{\mathrm{Des}(\zeta)\subseteq S}\sigma(\zeta)\Biggr)\circ w\\
&=\Biggl(\sum_{S\subseteq\{1,\ldots,|w|-1\}}\frac{(-1)^{|S|}}{|S|+1}
\sum_{\mathrm{Des}(\zeta)\subseteq S}\sigma(\zeta)\Biggr)\circ w.
\end{align*}
\qed
\end{proof}
The following characterization of the quasi-shuffle convolution logarithm is
the generalization of the standard shuffle convolution logarithm.
We shall need the following integral identity
for non-negative integers $d$ and $r$ which is 
proved for example in Reutenauer~\cite[p.~69]{R}:
\begin{equation*}
\int_{-1}^0x^d(1+x)^r\,\mathrm{d}x=\frac{(-1)^dd!r!}{(d+r+1)!}.
\end{equation*}
\begin{corollary}[Quasi-Shuffle convolution logarithm endomorphism]\\\label{cor:logonwordscomb}
The quasi-shuffle convolution logarithm~$\log^\quas(\id)$ acts on 
$1\cdots p$ as follows
\begin{equation*}
\log^\quas(1\cdots p)=\sum_{\zeta\in\Sb_p'}\frac{(-1)^{\mathrm{d}(\zeta)}}{p}
\begin{pmatrix} p-1 \\ \mathrm{d}(\zeta) \end{pmatrix}^{-1}\sigma(\zeta),
\end{equation*}
where $\mathrm{d}(\zeta)$ denotes the number of quasi-descents in $\zeta$.
\end{corollary}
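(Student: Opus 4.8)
The plan is to specialize the formula of Corollary~\ref{cor:logonwords} to the word $w=1\cdots p$ and then interchange the order of the two sums. Decomposing a quasi-permutation as $\lambda\circ\rho$ gives $\sigma(\zeta)\circ(1\cdots p)=\lambda\circ\bigl(\rho(1)\cdots\rho(p)\bigr)=\sigma(\zeta)$, so applying the quasi-permutation action to the identity word simply returns the quasi-permutation itself, and hence
\begin{equation*}
\log^\quas(1\cdots p)=\sum_{S\subseteq\{1,\ldots,p-1\}}\frac{(-1)^{|S|}}{|S|+1}\sum_{\mathrm{Des}(\zeta)\subseteq S}\sigma(\zeta).
\end{equation*}
By the correspondence between quasi-permutations in $\CC(\Sb_p)$ and surjections in $\Sb_p'$ recorded earlier, each term on the right is $\sigma(\zeta)$ for a unique $\zeta\in\Sb_p'$, so I can regroup and read off the coefficient of $\sigma(\zeta)$ as $\sum_{S\supseteq\mathrm{Des}(\zeta)}(-1)^{|S|}/(|S|+1)$, the sum being over all subsets $S$ of $\{1,\ldots,p-1\}$ that contain $\mathrm{Des}(\zeta)$.

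The second step reduces this to an alternating binomial sum. Writing $d=\mathrm{d}(\zeta)=|\mathrm{Des}(\zeta)|$ and parametrizing the subsets $S\supseteq\mathrm{Des}(\zeta)$ by the subsets $T$ of the complement $\{1,\ldots,p-1\}\setminus\mathrm{Des}(\zeta)$, which has size $p-1-d$, with $r=|T|$ the coefficient becomes
\begin{equation*}
\sum_{r=0}^{p-1-d}\binom{p-1-d}{r}\frac{(-1)^{d+r}}{d+r+1}.
\end{equation*}

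The third step evaluates this sum. I would write $1/(d+r+1)=\int_0^1 x^{d+r}\,\rd x$ (equivalently invoke the identity $\int_{-1}^0 x^d(1+x)^r\,\rd x=(-1)^d d!\,r!/(d+r+1)!$ quoted just before the statement), pull the finite sum inside the integral, and recognize $\sum_r\binom{p-1-d}{r}(-x)^r=(1-x)^{p-1-d}$ by the binomial theorem. This yields $(-1)^d\int_0^1 x^d(1-x)^{p-1-d}\,\rd x$, and the Beta integral evaluates it to $(-1)^d\,d!\,(p-1-d)!/p!=\tfrac{(-1)^d}{p}\binom{p-1}{d}^{-1}$, which is exactly the coefficient in the statement. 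Summing over all $\zeta\in\Sb_p'$ completes the proof.

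I do not anticipate a substantive obstacle; the work is essentially bookkeeping. The only points needing care are that the full range of subsets $S$ in Corollary~\ref{cor:logonwords}---all subsets of $\{1,\ldots,p-1\}$, including $\emptyset$---is precisely what reappears after reindexing by the complement of $\mathrm{Des}(\zeta)$, so that no terms are dropped or double-counted when the sums are swapped, and that collecting coefficients is well defined, which rests on the uniqueness of the surjection associated with each quasi-permutation. With these in place the three displayed computations chain together directly.
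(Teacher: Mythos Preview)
Your proposal is correct and follows essentially the same route as the paper: start from Corollary~\ref{cor:logonwords}, swap the sums to isolate the coefficient of a fixed $\sigma(\zeta)$, reduce that coefficient to the alternating binomial sum by counting subsets $S\supseteq\mathrm{Des}(\zeta)$ according to their size, and evaluate via an integral representation. The only cosmetic difference is that you use the Beta integral $\int_0^1 x^d(1-x)^{p-1-d}\,\rd x$ while the paper uses $\int_{-1}^0 x^d(1+x)^{p-1-d}\,\rd x$; these are related by the substitution $x\mapsto -x$ and give the same value.
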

\begin{proof}
We observe from Corollary~\ref{cor:logonwords} that $\log^\quas(1\cdots p)$
consists of a linear combination of quasi-permutations $\sigma(\zeta)$. 
Hence we directly compute the coefficient of an arbitrary quasi-permutation
$\sigma(\zeta)$ in $\log^\quas(1\cdots p)$ which, using the result of
Corollary~\ref{cor:logonwords}, is given by
\begin{equation*}
\sum_{S\subseteq\{1,\ldots,p-1\}\colon\mathrm{Des}(\zeta)\subseteq S}\frac{(-1)^{|S|}}{|S|+1}.
\end{equation*}
Suppose that $\zeta$ has quasi-descent indicies $p_1,\ldots,p_k$ so that
$\mathrm{d}(\zeta)=k$. To compute this coefficient we therefore have to
determine the number of subsets $S\subseteq\{1,\ldots,p-1\}$ which 
contain $p_1,\ldots,p_k$. Note the coefficient itself only depends on the
size of such sets. These subsets have possible size $|S|=k$
through to $|S|=p-1$. Starting with the case $|S|=k$ there is of course
only one set of this size containing $p_1,\ldots,p_k$, the set of
these integers themselves. Now consider the case $|S|=k+1$. Then an
extra ``quasi-descent'' can be placed in total of $p-1-k$ possible
positions, or equivalently in $p-1-k$ choose $1$ ways. 
When $|S|=k+2$, there are $p-1-k$ choose $2$ ways, and so forth so that
in general, when $|S|=k+i$, there are $p-1-i$ choose $i$ possible ways. 
Hence the coefficient above equals 
\begin{equation*}
\sum_{i=0}^{p-1-\mathrm{d}(\zeta)}\begin{pmatrix}p-1-\mathrm{d}(\zeta)\\i \end{pmatrix}
\frac{(-1)^{\mathrm{d}(\zeta)+i}}{\mathrm{d}(\zeta)+i+1}.
\end{equation*}
This form of the coefficient of is equal to 
\begin{align*}
\int_{-1}^0\sum_{i=0}^{p-1-\mathrm{d}(\zeta)}
\begin{pmatrix}p-1-\mathrm{d}(\zeta)\\i \end{pmatrix}x^{\mathrm{d}(\zeta)+i}\,\mathrm{d}x
&=\int_{-1}^0x^{\mathrm{d}(\zeta)}(1+x)^{p-1-\mathrm{d}(\zeta)}\,\mathrm{d}x\\
&=(-1)^{\mathrm{d}(\zeta)}
\frac{\mathrm{d}(\zeta)!(p-1-\mathrm{d}(\zeta))!}{p!}\\
&=\frac{(-1)^{\mathrm{d}(\zeta)}}{p}
\begin{pmatrix}p-1\\\mathrm{d}(\zeta)\end{pmatrix}^{-1},
\end{align*}
using the integral identity preceding the corollary. 
\qed
\end{proof}
\begin{remark}
This is equivalent to the quasi-shuffle logarithm given in 
Ebrahimi--Fard \textit{et al.}\/ \cite[Theorem~6.2]{E-FMPW}.
We included a self-contained derivation here for completeness.
\end{remark}

\section{Concluding remarks}\label{sec:conclu}
There has been a recent surge in the development of 
quasi-shuffle algebras and stochastic Taylor solution formulae 
in the context of semimartingales, on the 
theoretical and practical level. See Platen and Bruti--Liberati~\cite{B-LP},
and Marcus~\cite{Marcus}, Friz and Shekhar~\cite{FS} 
and Hairer and Kelly~\cite{HK} for contemporary references. 
For example Li and Liu~\cite{LL} considered systems driven 
by both Wiener and Poisson processes. 
We have shown that the Chen--Strichartz flowmap solution formula which
is well-known for Stratonovich stochastic differential systems 
driven by Wiener processes extends to systems driven by 
general continuous semimartigales. 
We demonstrated it is in fact a Lie series, and this property holds
irrespective of whether we consider the system in the It\^o
or Stratonovich sense. We also give and prove an explicit
formula for the Lie series coefficients.
Curry, Ebrahimi--Fard, Malham and Wiese~\cite{CE-FMW} have developed so-called efficient
simulation schemes for such systems driven by L\'evy processes. This 
involves the antisymmetric sign reverse endomorphism rather than
the quasi-shuffle logarithm endomorphism.

\begin{acknowledgements}
KEF is supported by Ram\'on y Cajal research grant RYC-2010-06995 from the Spanish government
and acknowledges support from the Spanish government under project MTM2013-46553-C3-2-P.
This research also received support from a grant by the BBVA Foundation.
FP acknowledges support from the grant ANR-12-BS01-0017, Combinatoire Alg\'ebrique,
R\'esurgence, Moules at Applications.
\end{acknowledgements}

\end{document}